\newtheorem{theorem}{Theorem}[section]
\newtheorem{lem}[theorem]{Lemma}
\newtheorem{prop}[theorem]{Proposition}
\theoremstyle{definition}
\newtheorem{definition}[theorem]{Definition}
\newtheorem{example}[theorem]{Example}
\theoremstyle{remark}
\numberwithin{equation}{section}
\newcommand{\Ann}{\mathrm{Ann}}
\newcommand{\Aut}{\mathrm{Aut}}
\begin{document}

\title{On $n$-isoclinism of skew braces}

\author{Risa Arai}
\address{Department of Mathematics, Ochanomizu University, 2-1-1 Otsuka, Bunkyo-ku, Tokyo, Japan}
\email{g2440601@edu.cc.ocha.ac.jp}

\author{Cindy (Sin Yi) Tsang}
\address{Department of Mathematics, Ochanomizu University, 2-1-1 Otsuka, Bunkyo-ku, Tokyo, Japan}
\email{tsang.sin.yi@ocha.ac.jp}
\urladdr{http://sites.google.com/site/cindysinyitsang/}

\subjclass[2020]{Primary 20N99 20F14}

\keywords{skew braces, $n$-isoclinism, left series, right series, annihilator series, verbal sub-skew braces, marginal left ideals}

\begin{abstract}The purpose of this paper is to  explore possible definitions of $n$-isoclinism for skew braces. We also introduce the notions of verbal sub-skew braces and marginal left ideals.
\end{abstract}

\maketitle

\vspace{-4mm}

\section{Introduction}

Let us first recall the definition of $n$-isoclinism for groups that was introduced by Hall \cite{Hall,Hall'} (also see \cite{Bioch,Hekster,van der Waall}). For any group $G$, write
\[ \gamma_1(G) = G,\quad \gamma_{n+1}(G)=[G,\gamma_n(G)]=[\gamma_n(G),G]\,\ (n\geq 1)\]
for its lower central series, and write
\[ \zeta_0(G)=1,\quad Z(G/\zeta_n(G)) = \zeta_{n+1}(G)/\zeta_n(G)\,\ (n\geq 0)\]
for its upper central series. For any groups $G$ and $H$, a pair
\[ \xi : G/\zeta_n(G)\longrightarrow H/\zeta_n(H),\quad \theta:\gamma_{n+1}(G)\longrightarrow \gamma_{n+1}(H)\]
of isomorphisms is said to be an \textit{$n$-isoclinism} if the diagram
\begin{equation}\label{group:diagram}
\begin{tikzcd}[column sep = 2.5cm, row sep = 1.5cm]
 (G/\zeta_n(G))^{\oplus n+1} \arrow{r}{\phi(G)} \arrow{d}[left]{\xi^{\oplus n+1}}& \gamma_{n+1}(G)\arrow{d}{\theta}\\
(H/\zeta_n(H))^{\oplus n+1} \arrow{r}[below]{\phi(H)} & \gamma_{n+1}(H)
\end{tikzcd}
\end{equation}
commutes, where the horizontal maps are induced by
\[  \phi(-):(x_1,x_2,\dots,x_{n+1}) \mapsto [x_1,[x_2,[\cdots,[x_n,x_{n+1}]]\cdots]].\]
We can also use left-normed commutator for the horizontal maps and there is no fundamental difference (see Section \ref{sec:verbal} for details). We say that $G$ and $H$ are \textit{$n$-isoclinic} if there is an $n$-isoclinism from $G$ to $H$.

\smallskip 

There are two important facts to note here. For any group $G$, it is well-known that the following hold (or see Propositions \ref{prop:verbal} and \ref{prop:marginal}).
\begin{enumerate}[(1)]
\item The $(n+1)$st commutator $\gamma_{n+1}(G)$ is the \textit{verbal subgroup} of the word
\[[x_1,[x_2,[\cdots,[x_n,x_{n+1}]]\cdots]],\]
namely $\gamma_{n+1}(G)$ is generated by
\[[g_1,[g_2,[\cdots,[g_n,g_{n+1}]]\cdots]]\]
for $g_1,g_2,\dots,g_n,g_{n+1}$ ranging over $G$ (see Definition \ref{def:verbal}).
\smallskip
\item The $n$th center $\zeta_n(G)$ is the \textit{marginal subgroup} of the word
\[[x_1,[x_2,[\cdots,[x_n,x_{n+1}]]\cdots]],\]
namely $\zeta_n(G)$ is the largest subgroup of $G$ for which the above word is well-defined on the left and right cosets (see Definition \ref{def:marginal}).
\end{enumerate}
Fact (1) implies that an $n$-isoclinism naturally induces an $(n+1)$-isoclinism. This means that two groups are $(n+1)$-isoclinic when they are $n$-isoclinic, which is a desirable property that one would expect $n$-isoclinism to satisfy. Fact (2) ensures that the horizontal maps of \eqref{group:diagram} are indeed well-defined.

\smallskip

Let us now consider possible definitions of $n$-isoclinism in the context of skew braces. Recall that a \textit{skew (left) brace} is any set $A = (A,\cdot,\circ)$ endowed with two group operations $\cdot$ and $\circ$ such that the left brace relation
\[ a\circ (b\cdot c) = (a\circ b)\cdot a^{-1}\cdot (a\circ c)\]
holds for all $a,b,c\in A$. Here, for each $a\in A$, we write $a^{-1}$ for its inverse in $(A,\cdot)$ and $\overline{a}$ for its inverse in $(A,\circ)$. Also, we use $1$ to denote the common identity element of $(A,\cdot)$ and $(A,\circ)$. It is well-known that
\begin{equation}\label{eqn:lambda} \lambda : (A,\circ)\longrightarrow \Aut(A,\cdot);\quad a\mapsto \lambda_a := (b\mapsto a^{-1}\cdot (a\circ b)) \end{equation}
is a homomorphism, and the two group operations are linked by formulae
\[ a\circ b = a\cdot \lambda_a(b),\quad a\cdot b = a\circ \lambda_{\overline{a}}(b),\quad \overline{a} = \lambda_{\overline{a}}(a^{-1}),\quad a^{-1} = \lambda_a(\overline{a}).\]
In addition to the commutators
\[ [a,b] = a\cdot b\cdot a^{-1}\cdot b^{-1},\quad [a,b]_\circ = a\circ b \circ \overline{a}\circ \overline{b}\]
of the groups $(A,\cdot)$ and $(A,\circ)$, it is common to consider the star product
\[ a * b = a^{-1}\cdot (a\circ b)\cdot b^{-1} = \lambda_a(b)\cdot b^{-1},\]
which quantifies the difference between the two group operations attached to the skew brace $A$. For any subsets $X$ and $Y$ of $A$, let $X*Y$ denote the subgroup of $(A,\cdot)$ generated by the elements $x*y$ with $x\in X$ and $y\in Y$.

\smallskip

For any skew brace $A$, the \textit{left series} of $A$ is defined by
\[ A^1 = A,\quad A^{n+1} = A*A^{n}\,\ (n\geq 1),\]
and the \textit{right series} of $A$ is defined by
\[ A^{(1)} = A,\quad A^{(n+1)} = A^{(n)}*A \,\ (n\geq 1).\]
They may be regarded as analogs of the lower central series. From a categorical point of view \cite{SKB}, the series introduced in \cite{central} seems to be the more appropriate analog, but we shall not consider it in this paper. 

\smallskip

For any skew brace $A$, the \textit{annihilator} of $A$ is defined to be
\begin{align*}
    \Ann(A) & = \{z\in A\mid [a,z] = [a,z]_\circ = z * a =1\mbox{ for all } a\in A\}\\
    & = \{z\in A \mid [a,z] =z*a = a*z=1\mbox{ for all }a\in A\},
\end{align*}
which is a natural analog of the center. It is well-known that $\Ann(A)$ is an ideal of $A$, namely it is a normal subgroup of both $(A,\cdot)$ and $(A,\circ)$ that is invariant under $\mathrm{Im}(\lambda)$. The \textit{annihilator series} of $A$ is defined by
\[ \Ann_0(A) = 1,\quad \Ann(A/\Ann_n(A)) = \Ann_{n+1}(A)/\Ann_n(A)\,\ (n\geq 0).\]
This may be regarded as an analog of the upper central series. Notice that $\Ann_n(A)$ is an ideal of $A$ by the lattice isomorphism theorem of groups, so we can indeed form the skew brace quotient $A/\Ann_n(A)$. 

\smallskip

In \cite{isoclinism}, Letourmy and Vendramin defined isoclinism (i.e. $1$-isoclinism) for skew braces, as follows. For any skew brace $A$, the \textit{commutator} of $A$ is the subgroup $A'$ of $(A,\cdot)$ generated by $\gamma_2(A,\cdot)$ and $A^2 = A*A = A^{(2)}$. For any skew braces $A$ and $B$, a pair 
\[ \xi : A/\Ann(A)\longrightarrow B/\Ann(B),\quad \theta: A' \longrightarrow B' \]
of isomorphisms is said to be an \textit{isoclinism} if the diagram
\[\begin{tikzcd}[column sep = 2.5cm, row sep = 1.5cm]
(A/\Ann(A))^{\oplus 2}\arrow{r}{\phi_i(A)} \arrow{d}[left]{\xi^{\oplus 2}}& A'\arrow{d}{\theta}\\
(B/\Ann(B))^{\oplus 2} \arrow{r}[below]{\phi_i(B)} & B'
\end{tikzcd}\]
commutes for both $i=1,2$, where the horizontal maps are induced by
\[ \begin{cases}
    \phi_1(-) : (x_1,x_2)\mapsto [x_1,x_2], \\ \phi_2(-):(x_1,x_2)\mapsto x_1*x_2.
\end{cases}\]
This definition has many nice properties that are analogous to groups, such as the existence of a stem skew brace in every isoclinism class \cite[Theorem 2.18]{isoclinism} and the uniqueness of its order \cite[Proposition 2.17]{isoclinism}.

\smallskip

It is natural to ask how $n$-isoclinism ought to be defined for skew braces for an arbitrary $n$. This question has been investigated in \cite{nisoclinism}. But there are some issues that are worth paying attention to. For example:
\begin{enumerate}[(1)]
\item As a sub-skew brace, say, it is unclear whether the $n$th term $A^n$ in the left series is generated by the $n$-fold right-normed star products
\[a_1 * (a_2 * ( \cdots * (a_{n-1}*a_n)\cdots )),\]
and similarly, it is unclear whether the $n$th term $A^{(n)}$ in the right series is generated by the $n$-fold left-normed star products
\[(( \cdots (a_1*a_2) * \cdots )*a_{n-1}) * a_n.\]
See Section \ref{subsec:verbal sub'} for details.
\smallskip
\item Modulo the $n$th annihilator $\Ann_n(A)$, the $(n+1)$-fold right-normed star product is always well-defined, but the  $(n+1)$-fold left-normed star product is not well-defined in general. See Section \ref{subsec:marginal sub'} for details.
\end{enumerate}
If we simply replace $A*A$ by $A^{n+1}$ or $A^{(n+1)}$ and $\Ann(A)$ by $\Ann_n(A)$, two skew braces might fail to be $(n+1)$-isoclinic even when they are $n$-isoclinic by point (1), and the notion of $n$-isoclinism would only apply to some skew braces when the left-normed star product is involved by point (2). To avoid these problems, perhaps the correct approach to $n$-isoclinism of skew braces is to use analogs of the verbal and marginal subgroups. 

\smallskip

The purpose of this paper is to introduce verbal sub-skew braces (Definition \ref{def:verbal'}) and marginal left ideals (Definition \ref{def:marginal'}) in the framework of skew braces. In the case of groups, a \textit{word}, in the variables $x_1,\dots,x_n$ say, is an expression that can be written as a product of
\[ x_1,\, x_2,\, \dots \, ,\, x_n,\, x_1^{-1},\, x_2^{-1},\, \dots\, ,\, x_n^{-1}.\]
In the case of skew braces, there are two group operations involved, so we define a \textit{word}, in the variables $x_1,\dots,x_n$ say, to be an expression that can be written as a product, using any combination of $\cdot$ and $\circ$, of
\[ x_1,\, x_2,\, \dots \, ,\, x_n,\, x_1^{-1},\, x_2^{-1},\, \dots\, ,\, x_n^{-1},\, \overline{x_1},\, \overline{x_2},\, \dots\, ,\, \overline{x_n}.\]
Due to their relation with $n$-isoclinism, we are particularly interested in the left-normed and right-normed star products. We shall investigate the verbal sub-skew braces and marginal left ideals associated to these words. We hope that our discussion will provide new insights into how $n$-isoclinism ought to be defined for skew braces (see Section \ref{last section}).

\section{Verbal subgroups and marginal subgroups}\label{sec:verbal}

We have the word in two variables given by the commutator
\[ [x_1,x_2] = x_1\cdot ({^{x_2}}x_1)^{-1}, \mbox{ where }{^{x_2}}x_1 = x_2\cdot x_1\cdot x_2^{-1}.\]
Recursively, we can define the $n$-fold left-normed commutator
\[\llbracket x_1,x_2,\dots,x_n\rrbracket   = [ \llbracket x_1,\dots,x_{n-1}\rrbracket,x_n]\]
and the $n$-fold right-normed commutator
\[[x_1,x_2,\dots,x_n]  = [x_1,[x_2,\dots,x_{n}]]\]
for any $n\geq 3$. We shall be particularly interested in these two words.

\subsection{Verbal subgroups}\label{subsec:verbal sub}

The next definition is due to Hall \cite{Hall'}.

\begin{definition}\label{def:verbal}
Let $W$ be a collection of group words in $n$ variables. For any group $G$, the \textit{verbal subgroup} of $G$ associated to $W$ is defined to be the subgroup $V_W(G)$ of $G$ generated by
\[ w(g_1,g_2,\dots,g_n)\]
for $w$ ranging over $W$ and $g_1,g_2,\dots,g_n$ ranging over $G$.
\end{definition}

The next propositions are well-known.

\begin{prop}\label{prop:verbal} Let $G$ be a group. For any $n\geq 1$, we have
\[ \gamma_n(G) = V_{[x_1,x_2,\dots,x_n]}(G).\]
\end{prop}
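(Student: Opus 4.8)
The plan is to prove the identity $\gamma_n(G) = V_{[x_1,x_2,\dots,x_n]}(G)$ by induction on $n$, using the recursive definition $[x_1,\dots,x_n] = [x_1,[x_2,\dots,x_n]]$ of the right-normed commutator together with the recursion $\gamma_{n+1}(G) = [G,\gamma_n(G)]$. The base case $n=1$ is immediate, since $\gamma_1(G)=G$ and the word in one variable is just $x_1$, so its verbal subgroup is $G$ itself. For the inductive step, I would assume $\gamma_n(G) = V_{[x_1,\dots,x_n]}(G)$ and aim to show $\gamma_{n+1}(G) = V_{[x_1,\dots,x_{n+1}]}(G)$.

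First I would establish the inclusion $V_{[x_1,\dots,x_{n+1}]}(G) \subseteq \gamma_{n+1}(G)$, which is the easier direction. Each generator of the verbal subgroup has the form $[g_1,[g_2,\dots,g_{n+1}]]$. By the induction hypothesis, $[g_2,\dots,g_{n+1}] \in \gamma_n(G)$, so the outer commutator $[g_1, [g_2,\dots,g_{n+1}]]$ lies in $[G,\gamma_n(G)] = \gamma_{n+1}(G)$. Since $\gamma_{n+1}(G)$ is a subgroup containing all such generators, the inclusion follows.

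The reverse inclusion $\gamma_{n+1}(G) \subseteq V_{[x_1,\dots,x_{n+1}]}(G)$ is where the main obstacle lies. By definition $\gamma_{n+1}(G) = [G,\gamma_n(G)]$ is generated by commutators $[g,c]$ with $g\in G$ and $c\in\gamma_n(G)$. The difficulty is that, by the induction hypothesis, $c$ is only a \emph{product} of basic commutators of the form $[g_2,\dots,g_{n+1}]$ (and their inverses), not itself a single such commutator; yet I want to express $[g,c]$ in terms of the generators $[g_1,[g_2,\dots,g_{n+1}]]$. The key tool is the standard commutator expansion identities
\[ [a,bc] = [a,b]\cdot {^{b}}[a,c],\qquad [a,b^{-1}] = {^{b^{-1}}}[a,b]^{-1}, \]
which let me reduce $[g,c]$, for $c$ a product of the relevant generators, to a product of conjugates of basic right-normed commutators. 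The remaining point is that each such conjugate again lies in the verbal subgroup: since $V_{[x_1,\dots,x_{n+1}]}(G)$ is a verbal subgroup it is fully invariant, hence in particular normal in $G$, so it absorbs the conjugating factors. Assembling these observations shows every generator $[g,c]$ of $\gamma_{n+1}(G)$ lies in $V_{[x_1,\dots,x_{n+1}]}(G)$, completing the induction.

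I expect the normality (indeed full invariance) of verbal subgroups to be the conceptual crux, so I would either invoke it as a standard fact or include a one-line justification that a verbal subgroup is stable under all endomorphisms and therefore normal. With that in hand, the commutator expansion identities turn the apparent mismatch between ``single commutator'' and ``product of commutators'' into a routine bookkeeping exercise rather than a genuine obstruction.
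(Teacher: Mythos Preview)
Your proposal is correct, but the route differs from the paper's in one notable respect. You handle the conjugates that arise from the expansion $[a,bc]=[a,b]\cdot{^{b}}[a,c]$ by invoking the full invariance (hence normality) of verbal subgroups, so that conjugates of generators are automatically absorbed. The paper instead avoids any appeal to normality: it uses the variant expansion $[x,y_1y_2]=[x,y_1][{^{y_1}}x,{^{y_1}}y_2]$ together with the identity ${^{y}}[x_1,\dots,x_n]=[{^{y}}x_1,\dots,{^{y}}x_n]$, so that the conjugated factor ${^{y_1}}y_2$ is again a single $n$-fold right-normed commutator, and then inducts on the length $\ell$ of the product. Your argument is arguably cleaner for groups; the paper's choice is deliberate, since it isolates the conjugation identity ${^{y}}[x_1,\dots,x_n]=[{^{y}}x_1,\dots,{^{y}}x_n]$ as the crux, precisely in order to explain later why the same proof breaks down for the star product in skew braces, where no analog of this identity is available.
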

\begin{proof}
The case $n=1$ is trivial. Now, suppose that $\gamma_n(G)$ is generated by
\[ S:=\{ [ g_1,g_2,\dots,g_n] : g_1,g_2,\dots,g_n\in G\},\]
and we want to show that $\gamma_{n+1}(G)$ is generated by
\[ T:=\{ [g_1,g_2,\dots,g_{n+1}]: g_1,g_2,\dots,g_{n+1}\in G\}.\]
It is enough to check that $[g,h] \in \langle T\rangle$ for all $g\in G$ and $h\in \gamma_n(G)$. By the induction hypothesis, we can write $h=s_1's_2'\cdots s_\ell'$, where 
\[ s_k'=s_k^{\epsilon_k} \mbox{ for some }s_k\in S,\, \epsilon_k\in \{\pm 1\}\]
for each $1\leq k\leq \ell$. The key here is that we have the identities
\begin{align}\label{eqn:commutator1}
   &[x,y_1y_2]=  [x,y_1][{^{y_1}}x,{^{y_1}}y_2],\,\ [x,y^{-1}] = [{^{y^{-1}}}x,y]^{-1},\\\label{eqn:commutator2}
   &{^y}[x_1,x_2,\dots,x_n]= [{^y}x_1,{^y}x_2,\dots,{^y}x_n].
\end{align}
For $\ell=1$, we then see that
\[ [g,s_1] \in T,\quad [g,s_1^{-1}]= [{^{s_1^{-1}}}g,s_1]^{-1} \in \langle T\rangle,\]
and so $[g,s_1']\in \langle T\rangle$. For $\ell\geq 2$, similarly we can use \eqref{eqn:commutator1} to write
\begin{align*}
 [g,s_1's_2'\cdots s_\ell'] & = [g,s_1'][{^{s_1'}g},{^{s_1'}(s_2'\cdots s_\ell')}]\\
 & = [g,s_1'][{^{s_1'}g}, ({^{s_1'}s_2})^{\epsilon_2}\cdots ({^{s_1'}s_\ell})^{\epsilon_\ell}].\end{align*}
Notice that ${^{s_1'}s_2},\dots,{^{s_1'}s_\ell}\in S$ by \eqref{eqn:commutator2}, and so it follows from induction on $\ell$ that $[g,h]\in \langle T\rangle$, as desired.
\end{proof}

\begin{prop}\label{prop:verbal'}Let $G$ be a group. For any $n\geq 1$, we have
\[ \gamma_n(G) = V_{\llbracket x_1,x_2,\dots,x_n \rrbracket}(G).\]
\end{prop}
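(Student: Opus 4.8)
The plan is to prove the statement by induction on $n$, in close parallel to the proof of Proposition \ref{prop:verbal}, but working on the \emph{first} argument of the commutator rather than the second. The base case $n=1$ is trivial, since $\llbracket x_1\rrbracket = x_1$ and hence $V_{\llbracket x_1\rrbracket}(G) = G = \gamma_1(G)$. For the inductive step, suppose $\gamma_n(G) = \langle S\rangle$, where $S = \{\llbracket g_1,\dots,g_n\rrbracket : g_1,\dots,g_n\in G\}$, and set $T = \{\llbracket g_1,\dots,g_{n+1}\rrbracket : g_1,\dots,g_{n+1}\in G\}$. Since $\llbracket g_1,\dots,g_{n+1}\rrbracket = [\llbracket g_1,\dots,g_n\rrbracket, g_{n+1}]$, we have $T = \{[s,g] : s\in S,\, g\in G\}$, so that the inclusion $\langle T\rangle\subseteq [\gamma_n(G),G]=\gamma_{n+1}(G)$ is immediate from the induction hypothesis. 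The crux is the reverse inclusion.

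To establish $\gamma_{n+1}(G)\subseteq\langle T\rangle$, I would use the recursion in the form $\gamma_{n+1}(G) = [\gamma_n(G),G]$ and show that $[h,g]\in\langle T\rangle$ for every $h\in\gamma_n(G)$ and $g\in G$. The identities needed are the mirror images of \eqref{eqn:commutator1} and \eqref{eqn:commutator2}, namely
\[ [y_1y_2,x] = {^{y_1}}[y_2,x]\cdot[y_1,x],\qquad [y^{-1},x] = \bigl({^{y^{-1}}}[y,x]\bigr)^{-1}, \]
together with ${^{y}}\llbracket x_1,\dots,x_n\rrbracket = \llbracket {^{y}}x_1,\dots,{^{y}}x_n\rrbracket$, the last of which expresses that conjugation, being an automorphism, commutes with the formation of left-normed commutators. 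This last identity gives ${^{y}}T\subseteq T$, and hence ${^{y}}\langle T\rangle = \langle T\rangle$ for every $y\in G$. Writing $h = s_1's_2'\cdots s_\ell'$ with each $s_k' = s_k^{\epsilon_k}$, $s_k\in S$, $\epsilon_k\in\{\pm 1\}$, the case $\ell = 1$ follows from $[s_1,g]\in T$ and $[s_1^{-1},g] = \bigl({^{s_1^{-1}}}[s_1,g]\bigr)^{-1}\in\langle T\rangle$, while the case $\ell\geq 2$ follows by induction on $\ell$, using the first displayed identity to peel off $s_1'$ and the conjugation-invariance of $\langle T\rangle$ to absorb the factor ${^{s_1'}}[s_2'\cdots s_\ell',g]$.

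Since this argument merely mirrors the proof of Proposition \ref{prop:verbal}, I do not expect any genuine obstacle. The one point requiring care is to choose the correct form of both the recursion and the commutator expansion: here one must use $\gamma_{n+1}(G)=[\gamma_n(G),G]$ and expand the left-hand entry of the commutator (via $[y_1y_2,x]$ and $[y^{-1},x]$), whereas the previous proposition used $\gamma_{n+1}(G)=[G,\gamma_n(G)]$ and expanded the right-hand entry. Verifying the two displayed commutator identities is a routine computation, and the invariance ${^{y}}T\subseteq T$ reduces to \eqref{eqn:commutator2} with $\llbracket\,\rrbracket$ in place of $[\,]$.
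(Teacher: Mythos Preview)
Your proposal is correct and matches the paper's approach: the paper simply states that one uses the identities \eqref{eqn:commutator'1} and \eqref{eqn:commutator'2} in place of \eqref{eqn:commutator1} and \eqref{eqn:commutator2}, and your displayed identities are exactly these (since ${^{y_1}}[y_2,x] = [{^{y_1}}y_2,{^{y_1}}x]$ and ${^{y^{-1}}}[y,x] = [y,{^{y^{-1}}}x]$). The only cosmetic difference is that in the $\ell\geq 2$ step you invoke normality of $\langle T\rangle$ to absorb the conjugate, whereas the paper conjugates the individual factors back into $S$ via \eqref{eqn:commutator'2} before applying the inductive hypothesis; these are equivalent formulations of the same argument.
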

\begin{proof}Instead of \eqref{eqn:commutator1} and \eqref{eqn:commutator2}, we use the identities
\begin{align}\label{eqn:commutator'1}
   &[y_1y_2,x]=  [{^{y_1}}y_2,{^{y_1}}x][y_1,x],\,\ [y^{-1},x] = [y,{^{y^{-1}}}x]^{-1},\\\label{eqn:commutator'2}
   &{^y}\llbracket x_1,x_2,\dots,x_n\rrbracket= \llbracket {^y}x_1,{^y}x_2,\dots,{^y}x_n\rrbracket,
\end{align}
and the proof is essentially the same as Proposition \ref{prop:verbal}. 
\end{proof}

Proposition \ref{prop:verbal} is a  crucial fact in order for an $n$-isoclinism to induce an $(n+1)$-isoclinism. This implies that the notion of $n$-isoclinism gets weaker as $n$ increases, which is a natural desirable property.

\begin{prop}\label{prop:nimpliesn+1}Two $n$-isoclinic groups are always $(n+1)$-isoclinic.
\end{prop}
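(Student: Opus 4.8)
The plan is to manufacture the required $(n+1)$-isoclinism data $(\xi',\theta')$ directly from the given $n$-isoclinism $(\xi,\theta)$, where $\xi:G/\zeta_n(G)\to H/\zeta_n(H)$ and $\theta:\gamma_{n+1}(G)\to\gamma_{n+1}(H)$ make \eqref{group:diagram} commute. For $\xi'$, I would use that an isomorphism carries centers to centers, so $\xi$ maps $\zeta_{n+1}(G)/\zeta_n(G)=Z(G/\zeta_n(G))$ onto $\zeta_{n+1}(H)/\zeta_n(H)=Z(H/\zeta_n(H))$; passing to the quotient by these central subgroups gives an induced isomorphism $\xi':G/\zeta_{n+1}(G)\to H/\zeta_{n+1}(H)$ that fits into the evident commuting square with $\xi$. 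For $\theta'$ I would take the restriction of $\theta$ to $\gamma_{n+2}(G)$; the real content is that $\theta(\gamma_{n+2}(G))=\gamma_{n+2}(H)$, after which $\theta'$ is automatically an isomorphism. It then remains to verify that the $(n+1)$-isoclinism diagram, whose horizontal maps are induced by the $(n+2)$-fold right-normed commutator, commutes for $(\xi',\theta')$.

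The crux is a compatibility of $\theta$ with conjugation, which I would isolate as a claim: for every $g\in G$ and $c\in\gamma_{n+1}(G)$ one has $\theta({}^{g}c)={}^{u}\theta(c)$, where $u\in H$ is any lift of $\xi(\bar g)$ and $\bar g$ denotes the image of $g$ in $G/\zeta_n(G)$. To prove it I would first treat $c=[g_2,\dots,g_{n+2}]$, a single $(n+1)$-fold right-normed commutator. Here \eqref{eqn:commutator2} gives ${}^{g}c=[{}^{g}g_2,\dots,{}^{g}g_{n+2}]$, again an $(n+1)$-fold commutator, so the commutativity of \eqref{group:diagram} computes $\theta({}^{g}c)=[u_2',\dots,u_{n+2}']$ with $u_i'$ any lift of $\xi(\overline{{}^{g}g_i})={}^{\xi(\bar g)}\xi(\bar g_i)$. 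Choosing the legitimate lift $u_i'={}^{u}u_i$ and applying \eqref{eqn:commutator2} once more in $H$ yields $\theta({}^{g}c)={}^{u}[u_2,\dots,u_{n+2}]={}^{u}\theta(c)$, where the marginal subgroup property (Proposition \ref{prop:marginal}) guarantees that these commutators do not depend on the chosen lifts. Since $\gamma_{n+1}(G)$ is generated by such commutators by Proposition \ref{prop:verbal}, the general case follows because $\theta$ is a homomorphism and both ${}^{g}(-)$ and ${}^{u}(-)$ are automorphisms, so the identity extends over products and inverses.

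Granting the claim, I would finish as follows. Writing $[g,c]={}^{g}c\cdot c^{-1}$ gives $\theta([g,c])={}^{u}\theta(c)\cdot\theta(c)^{-1}=[u,\theta(c)]\in[H,\gamma_{n+1}(H)]=\gamma_{n+2}(H)$ for all $g\in G$ and $c\in\gamma_{n+1}(G)$; as $\gamma_{n+2}(G)=[G,\gamma_{n+1}(G)]$, this shows $\theta(\gamma_{n+2}(G))\subseteq\gamma_{n+2}(H)$, and the reverse inclusion follows by applying the same argument to the inverse $n$-isoclinism $(\xi^{-1},\theta^{-1})$ from $H$ to $G$. This settles $\theta'$. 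For the diagram, an $(n+2)$-fold commutator $[x_1,\dots,x_{n+2}]=[x_1,[x_2,\dots,x_{n+2}]]$ with $c=[x_2,\dots,x_{n+2}]$ satisfies $\theta([x_1,c])=[u,\theta(c)]=[u,[u_2,\dots,u_{n+2}]]=[u,u_2,\dots,u_{n+2}]$, where the $u_i$ are lifts of $\xi(\bar x_i)$; choosing these same elements as lifts of $\xi'$ evaluated on the $x_i$ (legitimate since they reduce correctly modulo $\zeta_{n+1}(H)$) matches exactly the value of the bottom horizontal map, so the new diagram commutes.

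The main obstacle is the bookkeeping around the two layers of lifts: $\xi$ is defined only modulo $\zeta_n$, whereas $\xi'$ and the $(n+2)$-fold commutator live modulo $\zeta_{n+1}$, so one must repeatedly invoke Proposition \ref{prop:marginal} both to guarantee that the commutator brackets are independent of the chosen lifts and to reuse lifts of $\xi$ as lifts of $\xi'$. The conjugation identity \eqref{eqn:commutator2}, together with the verbal description in Proposition \ref{prop:verbal}, is precisely what reduces the whole argument to the single-commutator case, which is why Proposition \ref{prop:verbal} is the decisive input.
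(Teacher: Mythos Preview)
Your argument is correct, but it takes a longer route than the paper's. You split the $(n+2)$-fold commutator at the \emph{outermost} bracket, writing $[g_1,\dots,g_{n+2}]=[g_1,c]$ with $c=[g_2,\dots,g_{n+2}]\in\gamma_{n+1}(G)$, and then you need the auxiliary conjugation-equivariance claim $\theta({}^{g}c)={}^{u}\theta(c)$ to evaluate $\theta$ on $[g_1,c]={}^{g_1}c\cdot c^{-1}$. The paper instead splits at the \emph{innermost} bracket: since right-normed commutators are nested on the right, $[g_1,\dots,g_{n+2}]=[g_1,\dots,g_n,[g_{n+1},g_{n+2}]]$ is already an $(n+1)$-fold right-normed commutator in the $n+1$ entries $g_1,\dots,g_n,[g_{n+1},g_{n+2}]$. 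One can then apply the $n$-isoclinism diagram directly, and the only extra input is that $\xi$, being a homomorphism, sends $\widetilde{[g_{n+1},g_{n+2}]}$ to $[\xi(\widetilde{g_{n+1}}),\xi(\widetilde{g_{n+2}})]$. This yields $\theta([g_1,\dots,g_{n+2}])=[\xi(\widetilde{g_1}),\dots,\xi(\widetilde{g_{n+2}})]\in\gamma_{n+2}(H)$ in one line, simultaneously giving the inclusion $\theta(\gamma_{n+2}(G))\subseteq\gamma_{n+2}(H)$ (via Proposition~\ref{prop:verbal}) and the commutativity of the new diagram, with no need for the conjugation claim or for \eqref{eqn:commutator2}. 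Your approach does buy you the conjugation-equivariance of $\theta$ as a standalone fact, which is pleasant, but for the proposition at hand the paper's decomposition is the more economical one.
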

\begin{proof}
Let $G$ and $H$ be $n$-isoclinic groups and let $(\xi,\theta)$ be any $n$-isoclinism from $G$ to $H$. From the natural isomorphisms
\[
 G/\zeta_{n+1}(G) \simeq \frac{G/\zeta_n(G)}{Z(G/\zeta_n(G))},\quad 
H/\zeta_{n+1}(H) \simeq \frac{H/\zeta_n(H)}{Z(H/\zeta_{n}(H))},
\]
we  see that $\xi$ induces an isomorphism $\xi' : G/\zeta_{n+1}(G)\longrightarrow H/\zeta_{n+1}(H)$. For any $g\in G$, let $\widetilde{g}$ denote the class of $g$ modulo $\zeta_{n}(G)$. By the commutativity of \eqref{group:diagram}, for any $g_1,g_2,\dots,g_{n+2}\in G$, we have
\begin{align*}
\theta( [g_1,g_2,\dots,g_{n+2}]) 
& = [\xi(\widetilde{g_1}),\dots,\xi(\widetilde{g_n}),\xi([\widetilde{g_{n+1}},\widetilde{g_{n+2}}])]\\
&=[\xi(\widetilde{g_1}),\dots,\xi(\widetilde{g_n}),\xi(\widetilde{g_{n+1}}),\xi(\widetilde{g_{n+2}})] \in \gamma_{n+2}(H).
\end{align*} 
Since the elements $[g_1,g_2,\dots,g_{n+2}]$ generate $\gamma_{n+2}(G)$ by Proposition \ref{prop:verbal}, it follows that $\theta$ restricts to an isomorphism $\theta' : \gamma_{n+2}(G)\longrightarrow \gamma_{n+2}(H)$. Clearly $(\xi',\theta')$ is an $(n+1)$-isoclinism, whence $G$ and $H$ are $(n+1)$-isoclinic. 
\end{proof}

\subsection{Marginal subgroups}

The next definition is again due to Hall \cite{Hall'}.

\begin{definition}\label{def:marginal}
Let $W$ be a collection of group words in $n$ variables. For any group $G$, the \textit{marginal subgroup} of $G$ associated to $W$ is defined to be the subgroup $M_W(G)$ of $G$ consisting of the $z\in G$ such that
\begin{align*}
    w(g_1,g_2,\dots,g_n) & = w(g_1,\dots,g_iz,\dots, g_n)\\
    & = w(g_1,\dots,zg_i,\dots, g_n)
\end{align*}
for all $w\in W$, $g_1,\dots,g_n\in G$, and $1\leq i\leq n$. It is straightforward to verify that $M_W(G)$ is indeed a subgroup of $G$.
\end{definition}

The next propositions are well-known.

\begin{prop}\label{prop:marginal}
Let $G$ be a group. For any $n\geq 1$, we have
\[ \zeta_n(G) = M_{ [x_1,x_2,\dots,x_{n+1}]}(G).\]
\end{prop}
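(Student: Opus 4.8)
My plan is to prove the two inclusions $\zeta_n(G)\subseteq M_{[x_1,x_2,\dots,x_{n+1}]}(G)$ and $M_{[x_1,x_2,\dots,x_{n+1}]}(G)\subseteq \zeta_n(G)$ separately, using throughout the collection identities \eqref{eqn:commutator1}--\eqref{eqn:commutator2} together with the well-known containment $[\gamma_i(G),\zeta_j(G)]\subseteq \zeta_{j-i}(G)$ (with the convention $\zeta_k(G)=1$ for $k\leq 0$), which one obtains by induction on $i$ from the three subgroups lemma.

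The inclusion $M_{[x_1,x_2,\dots,x_{n+1}]}(G)\subseteq \zeta_n(G)$ is the easier one and uses only a single marginality condition. First I would record the characterization
\[ z\in \zeta_n(G)\iff [g_1,g_2,\dots,g_n,z]=1 \text{ for all } g_1,g_2,\dots,g_n\in G,\]
that is, $z$ lies in the $n$th center exactly when the right-normed commutator with $z$ in the \emph{last} slot vanishes. This I would prove by induction on $n$: fixing $g_2,\dots,g_n$ and letting $g_1$ vary shows $[g_2,\dots,g_n,z]\in Z(G)=\zeta_1(G)$, after which passing to $G/\zeta_1(G)$ and invoking $\zeta_{n-1}(G/\zeta_1(G))=\zeta_n(G)/\zeta_1(G)$ closes the induction. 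Granting this, if $z$ is marginal then the condition at position $n+1$ gives $[g_1,\dots,g_n,z\,g_{n+1}]=[g_1,\dots,g_n,g_{n+1}]$; setting $g_{n+1}=1$ makes the right side trivial and the left side equal to $[g_1,\dots,g_n,z]$, whence $z\in\zeta_n(G)$ by the characterization.

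For the reverse inclusion $\zeta_n(G)\subseteq M_{[x_1,x_2,\dots,x_{n+1}]}(G)$ I expect the main difficulty, since one must verify invariance of $[x_1,\dots,x_{n+1}]$ under substituting $g_i\mapsto g_iz$ and $g_i\mapsto zg_i$ at \emph{every} position $i$. My plan is to prove a more flexible statement by induction on the commutator length $m$: for $z\in\zeta_n(G)$ and any $1\leq m\leq n+1$, modifying a single argument of the right-normed commutator $[x_1,\dots,x_m]$ by left or right multiplication with $z$ changes its value only within the coset of $\zeta_{n-m+1}(G)$. The case $m=1$ is immediate since $z\in\zeta_n(G)$, and in the inductive step I would write $[a_1,\dots,a_m]=[a_1,[a_2,\dots,a_m]]$ and split into two cases: a change at position $1$ is analysed directly via \eqref{eqn:commutator1} using $[\zeta_n(G),\gamma_{m-1}(G)]\subseteq\zeta_{n-m+1}(G)$, while a change at a position $i\geq 2$ is first propagated through the inner commutator $[a_2,\dots,a_m]$ by the induction hypothesis (producing an error term in $\zeta_{n-m+2}(G)$) and then absorbed by expanding $[a_1,vd]$ with \eqref{eqn:commutator1} and using $[\gamma_1(G),\zeta_{n-m+2}(G)]\subseteq \zeta_{n-m+1}(G)$. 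Specializing to $m=n+1$ yields error terms in $\zeta_0(G)=1$, which is exactly the marginality of $z$ at every position.

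The step I expect to be most delicate is keeping track of the side (left versus right) on which the $\zeta$-valued error terms appear as they are pushed through successive commutator expansions; this is precisely what forces the inductive statement to be phrased in terms of cosets of the normal subgroups $\zeta_{n-m+1}(G)$ rather than exact equalities, and it is where the containments $[\gamma_i(G),\zeta_j(G)]\subseteq\zeta_{j-i}(G)$ do the essential work.
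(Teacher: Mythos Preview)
Your proposal is correct and follows essentially the same route as the paper: both directions rest on the characterization $z\in\zeta_n(G)\iff[g_1,\dots,g_n,z]=1$, the commutator expansions \eqref{eqn:commutator1} and \eqref{eqn:commutator'1}, and the key containment $[\zeta_n(G),\gamma_k(G)]\subseteq\zeta_{n-k}(G)$ (the paper's \eqref{eqn:inclusion}). The only difference is organizational: for the inclusion $\zeta_n(G)\subseteq M_{[x_1,\dots,x_{n+1}]}(G)$ the paper inducts on $n$ and, at position $i$, isolates an error term in $\zeta_{i-1}(G)$ which it then removes by invoking the induction hypothesis $\zeta_{i-1}(G)\subseteq M_{[x_1,\dots,x_i]}(G)$, whereas you keep $n$ fixed and induct on the commutator length $m$ with the coset-valued conclusion that a perturbation of $[x_1,\dots,x_m]$ by $z\in\zeta_n(G)$ lands in $\zeta_{n-m+1}(G)$---the two schemes amount to the same bookkeeping.
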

\begin{proof}The case $n=1$ is trivial. Now, suppose that the claim is true up to $n-1$. Put $w = [x_1,x_2,\dots,x_{n+1}]$ for short. If $z\in M_w(G)$, then
\begin{align*} 
&\hspace{1.05cm}[g_1,g_2,\dots,g_n,z] = 1\mbox{ for all }g_1,g_2,\dots,g_n\in G\\
&\implies [g_2,\dots,g_n,z]\in Z(G) \mbox{ for all }g_2,\dots,g_n\in G\\
&\implies [g_3,\dots,g_n,z] \in \zeta_2(G) \mbox{ for all }g_3,\dots,g_n\in G\\[-2pt]
& \hspace{5cm}\vdots\\
&\implies [g_n,z]\in \zeta_{n-1}(G)\mbox{ for all }g_n\in G\\
&\implies z\in \zeta_{n}(G).
\end{align*}
Conversely, if $z\in \zeta_{n}(G)$, then we can show that $z\in M_w(G)$, as follows. We let $g_1,g_2,\dots,g_{n+1}\in G$ be arbitrary. It suffices to show that
\[ [g_1,g_2,\dots,g_{n+1}] = [g_1,\dots,g_iz,\dots,g_{n+1}]\]
for all $1\leq i\leq n+1$ because $\zeta_{n}(G)$ is normal in $G$.
\begin{enumerate}[(a)]
\item \underline{The case $i=n+1$:} Using \eqref{eqn:commutator1}, we can write
\[ [g_1,\dots,g_{n},g_{n+1}z] = [g_1,\dots,g_{n-1},[g_{n},g_{n+1}][{^{g_{n+1}}}g_{n},{^{g_{n+1}}}z]].\]
Since $z\in \zeta_{n}(G)$ and $\zeta_{n}(G)$ is normal in $G$, we have
\[ [{^{g_{n+1}}}g_{n},{^{g_{n+1}}}z] \in \zeta_{n-1}(G).\]
But $\zeta_{n-1}(G)\subseteq M_{[x_1,x_2,\dots,x_{n}]}(G)$ by induction, so it follows that
\[ [g_1,\dots,g_n,g_{n+1}z] = [g_1,\dots,g_{n-1},[g_{n},g_{n+1}]],\] 
which is as desired.
\smallskip
\item \underline{The case $1\leq i\leq n$:} Put $g = [g_{i+1},\dots,g_{n+1}]$. Using \eqref{eqn:commutator'1}, we can write
\[  [g_1,\dots,g_iz,\dots,g_{n+1}] = [g_1,\dots,g_{i-1}, [{^{g_i}}z,{^{g_i}}g][g_i,g]].\]
As shown in \cite[5.1.11(iii)]{Robinson book}, for example, we have the inclusion
\begin{equation}\label{eqn:inclusion} [\zeta_n(G),\gamma_{n-i+1}(G)]\subseteq \zeta_{i-1}(G).\end{equation}
Since $z\in \zeta_n(G)$ and $g\in \gamma_{n-i+1}(G)$, and both of the subgroups here are normal in $G$, we have the containment
\[ [{^{g_i}}z,{^{g_i}}g] \in \zeta_{i-1}(G).\]
But $\zeta_{i-1}(G) \subseteq M_{[x_1,x_2,\dots,x_i]}(G)$ by induction, so it follows that
\[  [g_1,\dots,g_iz,\dots,g_{n+1}] = [g_1,\dots,g_{i-1}, [g_i,g]],\]
which is as desired.
\end{enumerate}
This completes the proof.
\end{proof}

\begin{prop}\label{prop:marginal'}
Let $G$ be a group. For any $n\geq 1$, we have
\[ \zeta_n(G) = M_{ \llbracket x_1,x_2,\dots,x_{n+1} \rrbracket}(G).\]
\end{prop}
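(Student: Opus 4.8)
The plan is to prove both inclusions by induction on $n$, adapting the proof of Proposition~\ref{prop:marginal} by systematically replacing the right-normed identities \eqref{eqn:commutator1} and \eqref{eqn:commutator2} with their left-normed counterparts \eqref{eqn:commutator'1} and \eqref{eqn:commutator'2}, exactly as Proposition~\ref{prop:verbal'} was deduced from Proposition~\ref{prop:verbal}. The base case $n=1$ is identical, so I would assume the statement up to $n-1$ and write $w = \llbracket x_1,\dots,x_{n+1}\rrbracket$. Throughout I would also use the nesting identity $\llbracket x_1,\dots,x_{n+1}\rrbracket = \llbracket \llbracket x_1,\dots,x_i\rrbracket, x_{i+1},\dots,x_{n+1}\rrbracket$.

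For the inclusion $M_w(G) \subseteq \zeta_n(G)$: whereas the right-normed proof substitutes $z$ into the last slot, here I would substitute into the first slot. If $z \in M_w(G)$, then setting $g_1 = 1$ in $w(zg_1, g_2,\dots,g_{n+1}) = w(g_1,\dots,g_{n+1})$ and using $\llbracket 1, g_2,\dots,g_{n+1}\rrbracket = 1$ yields $\llbracket z, g_1,\dots,g_n\rrbracket = 1$ for all $g_1,\dots,g_n \in G$. Writing $Y_j = \llbracket z, g_1,\dots,g_j\rrbracket$ and using $Y_j = [Y_{j-1}, g_j]$, an induction showing $Y_{n-k}\in\zeta_k(G)$, whose step is $x\in\zeta_{k+1}(G) \iff [x,g]\in\zeta_k(G)$ for all $g\in G$, gives $z = Y_0 \in \zeta_n(G)$. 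This is the exact mirror of the chain of implications in Proposition~\ref{prop:marginal}.

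For the reverse inclusion $\zeta_n(G)\subseteq M_w(G)$: as before, since $\zeta_n(G)$ is normal it suffices to treat right insertions $g_i z$, and I would split into the case $i=1$ and the cases $2\le i\le n+1$. For $2\le i\le n+1$, I would expand the $i$-fold prefix $\llbracket g_1,\dots,g_{i-1},g_iz\rrbracket = [v, g_iz]$, where $v = \llbracket g_1,\dots,g_{i-1}\rrbracket \in \gamma_{i-1}(G)$, via \eqref{eqn:commutator1} into $\llbracket g_1,\dots,g_i\rrbracket\, c$ with $c = [{}^{g_i}v, {}^{g_i}z]$; relabeling the index in \eqref{eqn:inclusion} to the form $[\zeta_n(G),\gamma_{i-1}(G)]\subseteq \zeta_{n-i+1}(G)$ gives $c\in\zeta_{n-i+1}(G)$. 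Reading the whole word as the left-normed commutator $\llbracket \llbracket g_1,\dots,g_i\rrbracket\, c,\, g_{i+1},\dots,g_{n+1}\rrbracket$ with $n-i+2$ entries, and invoking the inductive hypothesis $\zeta_{n-i+1}(G) = M_{\llbracket x_1,\dots,x_{n-i+2}\rrbracket}(G)$, the factor $c$ is absorbed and the value is unchanged (for $i=n+1$ this degenerates to $c\in\zeta_0(G)=1$, needing no induction). For $i=1$, the first slot forces the use of \eqref{eqn:commutator'1} instead: $[g_1z,g_2] = [{}^{g_1}z,{}^{g_1}g_2][g_1,g_2]$ with $d = [{}^{g_1}z,{}^{g_1}g_2]\in[\zeta_n(G),\gamma_1(G)]\subseteq\zeta_{n-1}(G)$, and absorbing $d$ via $\zeta_{n-1}(G) = M_{\llbracket x_1,\dots,x_n\rrbracket}(G)$ gives the claim.

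The main obstacle is the index bookkeeping in the reverse direction. The naive mirror, peeling off $g_{n+1}$ first and then handling the inner $n$-fold commutator, produces a correction term sitting one center-level too high for the available marginal hypothesis, an off-by-one. The point I would emphasize is that one must keep all tail variables $g_{i+1},\dots,g_{n+1}$ inside a single outer left-normed commutator with head $\llbracket g_1,\dots,g_i\rrbracket$, so that the correction term lands in exactly $\zeta_{n-i+1}(G) = M_{\llbracket x_1,\dots,x_{n-i+2}\rrbracket}(G)$ and is absorbed. Verifying that \eqref{eqn:inclusion} supplies precisely this center-level, and that marginal elements may be inserted by right-multiplication in the first slot, are the two facts that make the counting close up.
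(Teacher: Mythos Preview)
Your proposal is correct and is precisely the analogue the paper has in mind: the paper's proof of this proposition is the single sentence ``It is completely analogous to Proposition~\ref{prop:marginal},'' and what you have written is exactly that analogy spelled out, with the role of the extremal slot shifted from $i=n+1$ to $i=1$, the identities \eqref{eqn:commutator1} and \eqref{eqn:commutator'1} swapped accordingly, and the inclusion \eqref{eqn:inclusion} reindexed to $[\zeta_n(G),\gamma_{i-1}(G)]\subseteq\zeta_{n-i+1}(G)$. Your observation that the case $i=n+1$ degenerates because the correction lands in $\zeta_0(G)=1$ mirrors the corresponding trivial case $i=1$ in Proposition~\ref{prop:marginal}.
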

\begin{proof}
It is completely analogous to Proposition \ref{prop:marginal}.
\end{proof}

Proposition \ref{prop:marginal} shows that the horizontal maps in \eqref{group:diagram} are well-defined so that the notion of $n$-isoclinism applies to all pairs of groups.

\subsection{Definition of $n$-isoclinism} From Propositions \ref{prop:verbal}, \ref{prop:verbal'}, \ref{prop:marginal}, and \ref{prop:marginal'}, we see that left-normed commutators could be used for the horizontal maps in \eqref{group:diagram} and there is no fundamental difference. The commutativity of \eqref{group:diagram} does not depend on this choice either. Indeed, we have the identities 
\[ [x,y] = [y^{-1},{^y}x],\quad [y,x] = [{^y }x,y^{-1}]\]
by replacing $y$ by $y^{-1}$ in \eqref{eqn:commutator1} and \eqref{eqn:commutator'1}, respectively. Using these, we easily verify by induction that for $n$ even and $n$ odd, respectively, we have
\begin{align*}
& [ x_1,x_2,\dots,x_{n+1}] \\
&= \begin{cases}
    \llbracket x_{n+1},x_{n},{^{[x_{n},x_{n+1}]}}x_{n-1},\dots,x_4,{^{[x_4,\dots,x_{n+1}]}x_3}, x_2,{^{[x_2,\dots, x_{n+1}]} x_1}\rrbracket,\\
  \llbracket x_n,x_{n+1},x_{n-1},{^{[x_{n-1},x_n,x_{n+1}]}}x_{n-2},\dots,x_4,{^{[x_4,\dots,x_{n+1}]}x_3}, x_2,{^{[x_2,\dots, x_{n+1}]} x_1}\rrbracket.
\end{cases}
\end{align*}
Similarly, for $n$ even and $n$ odd, respectively, we have
\begin{align*}
&  \llbracket x_1,x_2,\dots,x_{n+1}\rrbracket\\
&= \begin{cases}
 [ {^{\llbracket x_1,\dots,x_{n}\rrbracket }x_{n+1}}, x_{n},{^{\llbracket x_1,\dots,x_{n-2}\rrbracket }x_{n-1}},x_{n-2},\dots,{^{\llbracket x_1,x_2\rrbracket}}x_3,x_2,x_1]
  ,\\
    [ {^{\llbracket x_1,\dots,x_{n}\rrbracket }x_{n+1}},x_{n},{^{\llbracket x_1,\dots,x_{n-2}\rrbracket }x_{n-1}},x_{n-2},\dots,{^{\llbracket x_1,x_2,x_3\rrbracket}}x_4,x_3,x_1,x_2].
\end{cases}
\end{align*}
It follows immediately from these identities that the commutativity of \eqref{group:diagram} does not depend on whether we use right-normed or left-normed commutators for the horizontal maps.

\section{Verbal sub-skew braces and marginal left ideals}

We have the word in two variables given by the star product
\[ x_1 * x_2 = x_1^{-1}\cdot (x_1\circ x_2)\cdot x_2^{-1}.\]
Recursively, we can define the $n$-fold left-normed star product
\[ x_1\star x_2\star \cdots \star x_n = (x_1\star \cdots \star x_{n-1}) * x_n\]
and the $n$-fold right-normed star product
\[ x_1* x_2* \cdots *x_n = x_1 * (x_2 * \cdots * x_n)\]
for any $n\geq 3$. We shall be particularly interested in these two words, and also the group commutators 
\[ [x_1,x_2,\dots,x_n]\quad \mbox{and}\quad  [x_1,x_2,\dots,x_n]_\circ\]
with respect to $\cdot$ and $\circ$, respectively. There is no fundamental difference, so we shall always take our commutators to be right-normed.

\subsection{Verbal sub-skew braces}\label{subsec:verbal sub'}

We propose the following definition, which is an analog of Definition \ref{def:verbal} in the context of skew braces.

\begin{definition}\label{def:verbal'}
Let $W$ be a collection of skew brace words in $n$ variables. For any skew brace $A$, the \textit{verbal sub-skew brace} of $A$ associated to $W$ is defined to be the sub-skew brace $V_W(A)$ of $A$ generated by
\[ w(a_1,a_2,\dots,a_n)\]
for $w$ ranging over $W$ and $a_1,a_2,\dots,a_n$ ranging over $A$.
\end{definition}

It is natural to ask whether the analogs of Propositions \ref{prop:verbal} and \ref{prop:verbal'} hold for skew braces. In other words, is it true that 
\begin{equation}\label{eqn:A verbal} A^n = V_{x_1*x_2*\cdots *x_n}(A)\quad \mbox{and}\quad A^{(n)} = V_{x_1\star x_2\star\cdots \star x_n}(A)\end{equation}
for all $n\in \mathbb{N}$ and for all skew braces $A$? The cases $n=1,2$ are trivial, but it seems that the proof of Proposition \ref{prop:verbal} cannot be modified to prove the case when $n\geq 3$. For the right-normed star product, we have
\begin{equation}\label{eqn:star1} 
x * (y_1y_2) = (x*y_1)  y_1(x*y_2) y_1^{-1},\,\ x*y^{-1} = y^{-1}(x*y)^{-1}y,
\end{equation}
which is an analog of \eqref{eqn:commutator1}. For the left-normed star product, we have
\begin{equation}\label{eqn:star2} 
(y_1y_2)*x= y_2^{-1}(y_1*x)y_2(y_2*x),\,\ y^{-1}*x = y(y*x)^{-1}y^{-1},
\end{equation}
which is an analog of \eqref{eqn:commutator2}, only in some special skew braces. For example, the identity \eqref{eqn:star2} is true in a \textit{two-sided} skew brace, namely a skew brace $A$ for which the right brace relation
\[ (b\cdot c) \circ a = (b\circ a)\cdot a^{-1}\cdot (c\circ a) \]
also holds for all $a,b,c\in A$. But in general, we only have the identity
\begin{equation}\label{eqn:star3} (y_1\circ y_2) * x = (y_1*(y_2*x)) (y_2*x)(y_1*x),\end{equation}
which is not easy to work with. In any case, the analogs of \eqref{eqn:commutator2} and \eqref{eqn:commutator'2} both fail because we cannot simplify $y(x_1*x_2)y^{-1}$ to a single star product in general, and this is why we cannot adapt the proof of Proposition \ref{prop:verbal} to prove \eqref{eqn:A verbal}. In view of this, we introduce two new series for a skew brace.

\begin{definition} For a skew brace $A$, define:
\begin{enumerate}[(1)]
\item the \textit{verbal left series} of $A$ to be $A^{\underline{n}} = V_{x_1*x_2*\cdots *x_n}(A)$ for $n\geq 1$;
\item the \textit{verbal right series} of $A$ to be $A^{(\underline{n})} = V_{x_1\star x_2\star\cdots \star x_n}(A)$ for $n\geq 1$.
\end{enumerate}
Clearly $A^{\underline{n}} = A^n = A^{(n)} = A^{(\underline{n})}$ for $n=1,2$, while $A^{\underline{n}}\subseteq A^n$ and $A^{(\underline{n})} \subseteq A^{(n)}$ for $n\geq 3$. We do not know whether the latter inclusions can be strict.
\end{definition}

It is known that the terms in the left series are all left ideals, and we now prove that the same is true for the terms in the verbal left series. It is also known that the terms in the right series are all ideals, but we do not know whether the terms in the verbal right series are even left ideals.

\begin{prop}\label{prop:verbal series} Let $A$ be a skew brace. For any $n\geq 1$, the sub-skew brace $A^{\underline{n}}$ is a left ideal of $A$ and is generated by $a_1*\cdots *a_n$ for $a_1,\dots,a_n\in A$ as a subgroup of $(A,\cdot)$.
\end{prop}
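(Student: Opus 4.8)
The plan is to show that the subgroup
\[ G := \langle\, a_1 * a_2 * \cdots * a_n : a_1,\dots,a_n\in A\,\rangle \]
of $(A,\cdot)$ generated by the right-normed star products is already a left ideal of $A$; since every left ideal is automatically a sub-skew brace, this settles both assertions at once. Indeed, once $G$ is known to be a left ideal, it is a sub-skew brace containing all the products $a_1*\cdots *a_n$, so $A^{\underline{n}}\subseteq G$ by minimality, while $G\subseteq A^{\underline{n}}$ is clear because $A^{\underline{n}}$ is in particular a $(A,\cdot)$-subgroup containing those products; hence $A^{\underline{n}}=G$, which is exactly the claim. As $G$ is by construction a subgroup of $(A,\cdot)$, to prove it is a left ideal I only need $\lambda_a(G)\subseteq G$ for every $a\in A$, and since $\lambda_a\in\Aut(A,\cdot)$ it suffices to check that $\lambda_a$ sends each generator $a_1*\cdots *a_n$ back into $G$.

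The computation rests on the identity
\[ \lambda_a(x*y) = \big((a\circ x)*y\big)\cdot (a*y)^{-1} \qquad (a,x,y\in A), \]
which I would derive as follows. Starting from $x*y=\lambda_x(y)\cdot y^{-1}$ and using that $\lambda_a$ is an automorphism of $(A,\cdot)$ together with the homomorphism property $\lambda_a\lambda_x=\lambda_{a\circ x}$ from \eqref{eqn:lambda}, one gets $\lambda_a(x*y)=\lambda_{a\circ x}(y)\cdot\lambda_a(y)^{-1}$. Substituting $\lambda_u(y)=(u*y)\cdot y$ for $u=a\circ x$ and for $u=a$ and cancelling the middle $y\cdot y^{-1}$ yields the displayed formula.

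With this identity in hand the conclusion is immediate for $n\geq 2$: writing the right-normed product as $a_1*y$ with $y=a_2*\cdots *a_n$, the identity gives
\[ \lambda_a(a_1*a_2*\cdots *a_n) = \big((a\circ a_1)*a_2*\cdots *a_n\big)\cdot\big(a*a_2*\cdots *a_n\big)^{-1}, \]
and each of the two factors is again a right-normed star product of length $n$, hence lies in $G$. Therefore $\lambda_a(G)\subseteq G$, so $G$ is a left ideal and $A^{\underline{n}}=G$; the case $n=1$ is trivial since $A^{\underline{1}}=A$. The only real content is getting the identity above exactly right, and I expect that to be the main (indeed the sole) obstacle. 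It is worth noting that this argument works for arbitrary skew braces precisely because $\lambda_a$ of a right-normed product is absorbed cleanly into its first slot via $a\circ a_1$, with no need to conjugate a star product or to expand its second slot --- which is exactly the manipulation that fails for the left-normed product, where the analog of \eqref{eqn:star2} is unavailable outside the two-sided case.
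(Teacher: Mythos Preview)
Your proof is correct and follows essentially the same strategy as the paper's: show that the $(A,\cdot)$-subgroup generated by the right-normed star products is $\lambda$-invariant, hence a left ideal, hence equal to $A^{\underline{n}}$. The only difference is the identity used---the paper instead takes $\lambda_a(x*y)=(a\circ x\circ\overline{a})*\lambda_a(y)$, which upon iteration sends a single length-$n$ right-normed product to a single length-$n$ right-normed product, whereas your identity $\lambda_a(x*y)=((a\circ x)*y)\cdot(a*y)^{-1}$ expresses it as a $\cdot$-product of two such; either suffices.
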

\begin{proof}
Consider the subgroup $I_n$ of $(A,\cdot)$ that is generated by $a_1*\cdots *a_n$ for $a_1,\dots,a_n\in A$. It is enough to show that $I_n$ is a left ideal of $A$, because then $I_n$ would be a sub-skew brace of $A$, which clearly means that $I_n = A^{\underline{n}}$ has to hold. To that end, we use the identity
\[ \lambda_y(x_1*x_2) = (y\circ x_1\circ \overline{y}) * \lambda_y(x_2). \]
Since the star product is right-normed here, applying this repeatedly yields
\[ \lambda_b(a_1*\cdots *a_n) = (b\circ a_1\circ \overline{b}) * (b\circ a_2\circ \overline{b})*\cdots *(b\circ a_{n-1}\circ \overline{b})*\lambda_b(a_n)\]
for all $b,a_1,\dots,a_n\in A$. Since these elements $a_1*\cdots *a_n$ generate $I_n$ with respect to $\cdot$ and $\lambda_b\in \Aut(A,\cdot)$, this implies that $\lambda_b(I_n)\subseteq I_n$ for all $b\in A$. Thus, indeed $I_n$ is a left ideal of $A$.
\end{proof}

Recall that the proof of Proposition \ref{prop:verbal} cannot be generalized to prove a similar statement for the star product for the following reasons:
\begin{enumerate}[(1)]
\item For the right-normed star product, the analog of \eqref{eqn:commutator2} fails.
\item For the left-normed star product, the analogs of \eqref{eqn:commutator'1} and \eqref{eqn:commutator'2} fail.
\end{enumerate}
There are conditions on a skew brace $A$ under which these issues vanish, in which case we can show that $A^{n} = A^{\underline{n}}$ and $A^{(n)} = A^{(\underline{n})}$, respectively.

\smallskip

First, we consider the right-normed star product.

\begin{lem}\label{lem:conjugation} Let $A$ be any skew brace such that the image of \eqref{eqn:lambda} is normalized by $\mathrm{Inn}(A,\cdot)$. Then for any $b,a_1,a_2,\dots,a_n\in A$, we have
\[ b(a_1*a_2*\cdots * a_n)b^{-1} = a_1' * a_2' * \cdots * a_n'\]
for some $a_1',a_2',\dots,a_n'\in A$.
\end{lem}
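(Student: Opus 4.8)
The plan is to induct on $n$, peeling the outermost factor off the right-normed product and absorbing the conjugation into a single $\lambda$ via the hypothesis. Throughout, write $c_b$ for the inner automorphism $x \mapsto b\,x\,b^{-1}$ of $(A,\cdot)$; recall that, by definition, $x * y = \lambda_x(y)\cdot y^{-1}$ for all $x,y \in A$, and that the stated hypothesis means exactly that $c_b\lambda_a c_b^{-1} \in \mathrm{Im}(\lambda)$ for every $a,b \in A$. The base case $n=1$ is immediate, since there the product is just $a_1$ and one takes $a_1' = b a_1 b^{-1}$.

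For the inductive step I would set $Q = a_2 * \cdots * a_n \in A$, so that the right-normed nesting gives $a_1 * a_2 * \cdots * a_n = a_1 * Q = \lambda_{a_1}(Q)\cdot Q^{-1}$. Since $c_b$ is an automorphism of $(A,\cdot)$, applying it yields
\[ b(a_1 * Q)b^{-1} = c_b\bigl(\lambda_{a_1}(Q)\bigr)\cdot c_b(Q)^{-1} = \bigl(c_b\lambda_{a_1}c_b^{-1}\bigr)\bigl(c_b(Q)\bigr)\cdot c_b(Q)^{-1}. \]
By the hypothesis there is $a_1' \in A$ with $c_b\lambda_{a_1}c_b^{-1} = \lambda_{a_1'}$, so writing $Q' = c_b(Q) = bQb^{-1}$ the right-hand side becomes $\lambda_{a_1'}(Q')\cdot (Q')^{-1} = a_1' * Q'$. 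It then remains to express $Q' = b(a_2 * \cdots * a_n)b^{-1}$ as a right-normed star product, which is exactly the inductive hypothesis for the $(n-1)$-fold product; this gives $a_2',\dots,a_n' \in A$ with $Q' = a_2' * \cdots * a_n'$, whence $b(a_1 * \cdots * a_n)b^{-1} = a_1' * (a_2' * \cdots * a_n') = a_1' * a_2' * \cdots * a_n'$, closing the induction.

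The only nonformal ingredient is the single application of the normalization hypothesis to the outermost $\lambda_{a_1}$; everything else is the definition $x*y = \lambda_x(y)y^{-1}$ together with the fact that $c_b$ preserves $\cdot$. I expect the conceptual crux to be the role of the right-normed nesting rather than any computation: it places a $\lambda$ at the head of the product while leaving the whole tail $Q$ inside an ordinary element, so that $c_b$ passes directly onto $Q$ and the induction applies to $Q$ itself. The same manipulation does not work for a left-normed product $a_1 \star \cdots \star a_n = (a_1 \star \cdots \star a_{n-1}) * a_n$, because there the conjugation would have to be rewritten as a $\lambda$ acting on the inner left-normed factor, and $\lambda_{c_b(R)}$ need not coincide with $c_b\lambda_R c_b^{-1}$ in general.
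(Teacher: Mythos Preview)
Your proof is correct and follows essentially the same route as the paper: the paper first proves the identity $b(a*c)b^{-1} = a' * (bcb^{-1})$ using exactly your computation $c_b(\lambda_a(c)c^{-1}) = (c_b\lambda_a c_b^{-1})(c_b(c))\cdot c_b(c)^{-1} = \lambda_{a'}(bcb^{-1})\cdot(bcb^{-1})^{-1}$, and then iterates it down the right-normed nesting (writing the unrolled chain rather than phrasing it as induction, but the content is identical). Your closing remark on why the left-normed case fails is additional commentary not present in the paper's proof, but it is accurate.
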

\begin{proof} For any $a\in A$, the hypothesis implies that
\[ \mathrm{conj}(b) \lambda_a \mathrm{conj}(b)^{-1} = \lambda_{a'}\]
for some $a'\in A$, where $\mathrm{conj}(b)$ denotes the inner automorphism $x\mapsto bxb^{-1}$ of $(A,\cdot)$. Thus, for any $a,c\in A$, we can write
\begin{align}\label{eqn:conjugation*}
b(a*c) b^{-1} & = \mathrm{conj}(b)( \lambda_a(c)c^{-1})\\\notag
& =  \lambda_{a'}(\mathrm{conj}(b)(c)) \cdot \mathrm{conj}(b)(c)^{-1}\\\notag
& = \lambda_{a'}(bcb^{-1})\cdot (bcb^{-1})^{-1}\\\notag
& = a' * (bcb^{-1}).
\end{align}
Since the star product is right-normed here, we deduce that
\begin{align*}
b(a_1*a_2*\cdots * a_n)b^{-1} & = a_1' * (b (a_2*\cdots *a_n)b^{-1})\\
    & = a_1' * a_2' * (b(a_3*\cdots *a_n)b^{-1})\\
    & \hspace{2cm}\vdots\\
    & = a_1' * a_2' * \cdots * a_{n-1}' * (ba_nb^{-1})
\end{align*}
for some $a_1',a_2',\dots,a_{n-1}'\in A$, as desired.
\end{proof}

\begin{prop}\label{prop:equal1} Let $A$ be any skew brace such that the image of \eqref{eqn:lambda} is normalized by $\mathrm{Inn}(A,\cdot)$. Then $A^n = A^{\underline{n}}$ for all $n\geq 1$.
\end {prop}
\begin{proof}
The case $n=1$ is trivial. Now, suppose that $A^n = A^{\underline{n}}$. By Proposition \ref{prop:verbal series}, this means that $A^n$ is generated by
\[ S:=\{ a_1* a_2* \cdots * a_n : a_1,a_2,\dots,a_n\in A\}\]
as a subgroup of $(A,\cdot)$, and we shall show that $A^{n+1}$ is generated by
\[ T:=\{ a_1 * a_2*\cdots *a_{n+1}: a_1,a_2,\dots,a_{n+1}\in A\}\]
as a subgroup of $(A,\cdot)$. It suffices to show that $a*b \in \langle T\rangle$ for all $a\in A$ and $b\in A^n$, where $\langle\,\ \rangle$ denotes subgroup generation in $(A,\cdot)$. By the induction hypothesis, we can write $b=s_1's_2'\cdots s_\ell'$, where 
\[ s_k'=s_k^{\epsilon_k} \mbox{ for some }s_k\in S,\, \epsilon_k\in \{\pm 1\}\]
for each $1\leq k\leq \ell$. For $\ell=1$, it follows from \eqref{eqn:star1} and \eqref{eqn:conjugation*} that
\[ a*s_1 \in T,\quad a*s_1^{-1}= s_1^{-1}(a*s_1)^{-1}s_1 = (a'*s_1)^{-1}\in \langle T\rangle\]
for some $a'\in A$, and so $a*s_1'\in \langle T\rangle$. For $\ell\geq 2$, again \eqref{eqn:star1} and \eqref{eqn:conjugation*} imply
\begin{align*}
 a * (s_1's_2'\cdots s_\ell') & = (a*s_1')s_1'(a * (s_2'\cdots s_\ell'))s_1'^{-1}\\
 & = (a*s_1') (a'' * ({^{s_1'}s_2})^{\epsilon_2}\cdots ({^{s_1'}s_\ell})^{\epsilon_\ell})\end{align*}
for some $a''\in A$. But note that ${^{s_1'}s_2},\dots,{^{s_1'}s_\ell}\in S$ by Lemma \ref{lem:conjugation}. Hence, it follows from induction on $\ell$ that $a*b\in \langle T\rangle$, as desired.
\end{proof}

Next, we consider the left-normed star product. The proof is very similar to Proposition \ref{prop:equal1}. In fact, it is slightly easier because we no longer have to worry about conjugation of star products when $A*A\subseteq Z(A,\cdot)$.

\begin{prop}\label{prop:equal2} Let $A$ be any two-sided skew brace such that $A*A$ lies in $Z(A,\cdot)$. Then $A^{(n)} = A^{(\underline{n})}$ for all $n\geq 1$. In fact, the sub-skew brace $A^{(n)}$ is generated by $a_1\star \cdots \star a_n$, where $a_1,\dots,a_n$ ranges over $A$, not just as a sub-skew brace of $A$ but also as a subgroup of $(A,\cdot)$, for all $n\geq 1$.
\end{prop}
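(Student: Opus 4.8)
The plan is to mirror the proof of Proposition \ref{prop:equal1}, inducting on $n$ to show that $A^{(n)}$ is generated \emph{as a subgroup of $(A,\cdot)$} by the $n$-fold left-normed star products $a_1\star\cdots\star a_n$. The equality $A^{(n)}=A^{(\underline{n})}$ then follows formally: the inclusion $A^{(\underline{n})}\subseteq A^{(n)}$ is already known, while the subgroup generation we establish shows $A^{(n)}\subseteq A^{(\underline{n})}$, since $A^{(\underline{n})}$ is the sub-skew brace generated by the same elements. The base cases $n=1,2$ are immediate, as $A^{(1)}=A$ and $A^{(2)}=A*A$ are generated by the $1$-fold and $2$-fold star products by definition.

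The key observation, and the reason the two hypotheses are imposed, is that together they collapse the troublesome identity \eqref{eqn:star2} into something manageable. Because $A$ is two-sided, \eqref{eqn:star2} holds; because $A*A\subseteq Z(A,\cdot)$, each factor $y_1*x$ and $(y*x)^{-1}$ is central, so \eqref{eqn:star2} reduces to
\[ (y_1y_2)*x = (y_1*x)(y_2*x),\qquad y^{-1}*x = (y*x)^{-1}. \]
In other words, for each fixed $x\in A$ the map $\mu_x\colon(A,\cdot)\to(A,\cdot)$, $y\mapsto y*x$, is a group homomorphism with image in $A*A$. This is precisely what makes the argument easier than Proposition \ref{prop:equal1}: the analog of \eqref{eqn:commutator'2} fails in general because conjugation cannot be pushed through a star product, but here every star product of length $\geq 2$ is central, so conjugation acts trivially and never intervenes.

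For the inductive step, assume $A^{(n)}$ is generated as a subgroup of $(A,\cdot)$ by $S=\{a_1\star\cdots\star a_n : a_i\in A\}$. By definition $A^{(n+1)}=A^{(n)}*A$ is generated as a subgroup of $(A,\cdot)$ by the elements $b*a$ with $b\in A^{(n)}$ and $a\in A$, so it suffices to show each such $b*a$ lies in $\langle T\rangle$, where $T=\{a_1\star\cdots\star a_{n+1} : a_i\in A\}$ and $\langle\,\rangle$ denotes subgroup generation in $(A,\cdot)$. Writing $b=s_1^{\epsilon_1}\cdots s_\ell^{\epsilon_\ell}$ with $s_k\in S$ and $\epsilon_k\in\{\pm1\}$ via the induction hypothesis, and iterating the homomorphism property of $\mu_a$, we obtain
\[ b*a = \prod_{k=1}^{\ell}(s_k*a)^{\epsilon_k}. \]
Each factor $s_k*a=(a_1\star\cdots\star a_n)*a=a_1\star\cdots\star a_n\star a$ is an $(n+1)$-fold left-normed star product, hence lies in $T$, so $b*a\in\langle T\rangle$ as required. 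The reverse inclusion $\langle T\rangle\subseteq A^{(n+1)}$ is clear since each generator of $T$ is of the form $(a_1\star\cdots\star a_n)*a_{n+1}$ with $a_1\star\cdots\star a_n\in A^{(n)}$, completing the induction.

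I do not anticipate a genuine obstacle: once the homomorphism property of $\mu_x$ is in hand, the computation is a direct distribution of $*a$ across a $\cdot$-product, with no conjugation terms to track. The only point requiring care is bookkeeping the distinction between the sub-skew brace $A^{(\underline{n})}$ and the subgroup of $(A,\cdot)$ generated by $T$; the strengthened conclusion asserts these coincide, which is exactly what exhibiting a $(A,\cdot)$-generating set inside the ideal $A^{(n)}$ delivers.
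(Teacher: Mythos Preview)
Your proposal is correct and follows essentially the same approach as the paper: both argue by induction that $A^{(n)}$ is generated as a subgroup of $(A,\cdot)$ by the $n$-fold left-normed star products, the key point being that the hypotheses make $x\mapsto x*a$ a group endomorphism of $(A,\cdot)$ via \eqref{eqn:star2}, which lets one distribute $*a$ across any $\cdot$-product of generators. The paper's write-up is more compressed but the content is the same.
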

\begin{proof} The case $n=1$ is trivial. Now, suppose that $A^{(n)}$ is generated by
\[ S:=\{ a_1 \star a_2\star \cdots \star a_n: a_1,a_2,\dots,a_n\in A\}\]
as a subgroup of $(A,\cdot)$, and we shall show that $A^{(n+1)}$ is generated by
\[ T:=\{a_1\star a_2\star \cdots \star a_{n+1} : a_1,a_2,\dots,a_{n+1}\in A\}\]
as a subgroup of $(A,\cdot)$. It suffices to show that $b*a\in \langle T\rangle$ for all $a\in A$ and $b\in A^{(n)}$, where $\langle \,\ \rangle$ denotes subgroup generation in $(A,\cdot)$. By the induction hypothesis, we can write $b=s_1's_2'\cdots s_\ell'$, where 
\[ s_k'=s_k^{\epsilon_k} \mbox{ for some }s_k\in S,\, \epsilon_k\in \{\pm 1\}\]
for each $1\leq k\leq \ell$. Since $A$ is two-sided and $A*A$ lies in $Z(A,\cdot)$, it follows from \eqref{eqn:star2} that $x\mapsto x * a$ defines an endomorphism on $(A,\cdot)$, so we have
\begin{align*}
(s_1's_2'\cdots s_\ell')* a
 = (s_1*a)^{\epsilon_1}(s_2*a)^{\epsilon_2}\cdots (s_\ell *a)^{\epsilon_\ell}.
\end{align*}
Since $s_k * a\in T$ for each $1\leq k\leq \ell$, we see that $b*a\in \langle T\rangle$, as desired.
 \end{proof}

\subsection{Marginal left ideals}\label{subsec:marginal sub'}

We propose the following definition, which is an analog of Definition \ref{def:marginal} in the context of skew braces.
 
\begin{definition}\label{def:marginal'}
Let $W$ be a collection of skew brace words in $n$ variables. For any skew brace $A$, the \textit{marginal left ideal} of $A$ associated to $W$ is defined to be the left ideal $M_W(A)$ of $A$ consisting of the $z\in A$ such that
\begin{align}\label{eqn:w1}
     w(a_1,a_2,\dots,a_n) & = w(a_1,\dots,a_i\lambda_b(z),\dots,a_n)\\\notag
     & = w(a_1,\dots,\lambda_b(z)a_i,\dots,a_n)\\\label{eqn:redundant}
     & = w(a_1,\dots,a_i\circ \lambda_b(z),\dots,a_n)\\\notag
     & = w(a_1,\dots,\lambda_b(z)\circ a_i,\dots,a_n)
\end{align}
for all $w\in W$, $b,a_1,\dots,a_n\in A$, and $1\leq i\leq n$. Note that \eqref{eqn:redundant} follows from \eqref{eqn:w1} and so is in fact redundant. It is not difficult to check that $M_W(A)$ is indeed a left ideal of $A$. The only thing that might not be obvious is that 
\[ w(a_1,a_2,\dots,a_n) = w(a_1,\dots,\lambda_b(z^{-1})\circ a_i,\dots,a_n)\]
whenever $z\in M_W(A)$. To see why this equality holds, put $c= \lambda_b(z^{-1})$ and\newpage\noindent note that $\overline{c} = \lambda_{\overline{c}}(c^{-1}) = \lambda_{\overline{c}\circ b}(z)$. Since $z\in M_W(A)$, by viewing $\lambda_b(z^{-1})\circ a_i$ as a single element, we obtain 
\begin{align*}
w(a_1,\dots,\lambda_b(z^{-1})\circ a_i,\dots,a_n)
& = w(a_1,\dots,\lambda_{\overline{c}\circ b}(z) \circ \lambda_b(z^{-1})\circ a_i,\dots,a_n)\\
& = w(a_1,\dots,\overline{c}\circ c \circ a_i,\dots,a_n)\\
& = w(a_1,\dots,a_i,\dots,a_n),
\end{align*}
which is as desired.
\end{definition}

Analogous to Proposition \ref{prop:marginal} or \ref{prop:marginal'}, the annihilator of a skew brace may be realized as a marginal left ideal.

\begin{prop}\label{prop:annihilator} Let $A$ be a skew brace. We have
\begin{align*}
    \Ann(A) &=M_{\{[x_1,x_2],x_1*x_2\}}(A)\\&= M_{\{[x_1,x_2],[x_1,x_2]_\circ ,x_1*x_2\}}(A).
\end{align*}
\end{prop}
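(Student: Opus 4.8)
The plan is to establish the chain of inclusions
\[
\Ann(A)\subseteq M_{\{[x_1,x_2],[x_1,x_2]_\circ,x_1*x_2\}}(A)\subseteq M_{\{[x_1,x_2],x_1*x_2\}}(A)\subseteq\Ann(A),
\]
which forces all three sets to coincide. The middle inclusion is immediate from Definition \ref{def:marginal'}: enlarging the collection of words only imposes more constraints, so $M_{W'}(A)\subseteq M_W(A)$ whenever $W\subseteq W'$.

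For the last inclusion, I would take $z$ in the smaller marginal left ideal and read off the three defining relations of $\Ann(A)$ from its second description. Specializing $b=1$ recovers the ordinary group-theoretic marginality of $z$, so the word $[x_1,x_2]$ forces $z\in Z(A,\cdot)$, that is $[a,z]=1$ for all $a$. For the star product I would exploit the freedom in the arguments: the condition $(za_1)*a_2=a_1*a_2$ with $a_1=1$ collapses to $z*a_2=1*a_2=1$, and the condition $a_1*(a_2z)=a_1*a_2$ with $a_2=1$ collapses to $a_1*z=a_1*1=1$. Hence $[a,z]=z*a=a*z=1$ for all $a$, which is exactly membership in $\Ann(A)$.

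For the first inclusion, the crucial preliminary observation is that $\Ann(A)$ is a left ideal, so $c:=\lambda_b(z)\in\Ann(A)$ for every $b\in A$ whenever $z\in\Ann(A)$; it therefore suffices to check that an arbitrary $c\in\Ann(A)$ is \say{invisible} to each of the three words. Unwinding $c\in\Ann(A)$ gives $\lambda_c=\id$ and $\lambda_a(c)=c$ together with $c\in Z(A,\cdot)\cap Z(A,\circ)$, from which I would first deduce that $a\cdot c=a\circ c$ and $c\cdot a=c\circ a$ for all $a$. This single identity does two things: it makes the $\cdot$- and $\circ$-substitutions of $c$ literally equal (so the relations \eqref{eqn:redundant} of Definition \ref{def:marginal'} need no separate treatment), and it reduces the condition for $[x_1,x_2]_\circ$ to the statement that $c$ is $\circ$-central, which holds; the condition for $[x_1,x_2]$ holds because $c$ is $\cdot$-central.

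It remains to verify the four star-product conditions for $c\in\Ann(A)$, and this is the main computational obstacle. The right-hand substitutions are routine: $a_1*(a_2c)=a_1*a_2$ follows from $\lambda_{a_1}(c)=c$, while $a_1*(ca_2)=c\,(a_1*a_2)\,c^{-1}=a_1*a_2$ uses $\lambda_{a_1}(c)=c$ together with the $\cdot$-centrality of $c$. The delicate cases are the left-hand substitutions $(a_1c)*a_2$ and $(ca_1)*a_2$, where one must control $\lambda_{a_1c}$; here the identity $a_1c=a_1\circ c$ (respectively $ca_1=c\circ a_1$) linearizes the subscript through the homomorphism property of $\lambda$, yielding $\lambda_{a_1c}=\lambda_{a_1}\lambda_c=\lambda_{a_1}$, so both conditions follow at once. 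Assembling these verifications completes the first inclusion and hence the proposition.
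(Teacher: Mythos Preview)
Your proof is correct and follows the same architecture as the paper's: the identical chain of three inclusions, with the middle one trivial, the last obtained by specializing variables to $1$, and the first by checking that any element of $\Ann(A)$ is invisible to each word. The only cosmetic difference is in the star-product verification for the first inclusion: the paper invokes the identities \eqref{eqn:star1} and \eqref{eqn:star3} (checking $(a_1\circ z)*a_2$ and $a_1*(a_2z)$, which suffices since $\Ann(A)$ is an ideal), whereas you compute all four $\cdot$-substitutions directly via $\lambda_{a_1c}=\lambda_{a_1\circ c}=\lambda_{a_1}\lambda_c=\lambda_{a_1}$.
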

\begin{proof} Put $W = \{[x_1,x_2],x_1*x_2\}$ and $W' = W\cup \{[x_1,x_2]_\circ\}$. Since
\[M_W(A) \supseteq  M_{W'}(A)\]
clearly holds, it suffices to show that
\[ M_{W'}(A) \supseteq \Ann(A) \supseteq M_W(A).\]
We prove these two inclusions separately.
\begin{enumerate}[(1)]
\item \underline{first inclusion:} Let $z\in \Ann(A)$. Since $\Ann(A)$ is an ideal of $A$, for each word $w\in W'$ and $1\leq i \leq 2$, we only need to check either \eqref{eqn:w1} or \eqref{eqn:redundant}, and we also do not need to consider the action of $\lambda$. This means that it suffices to show that
\begin{align*}
    [a_1,a_2] &= [a_1z,a_2] = [a_1,a_2z]\\
    [a_1,a_2]_\circ &= [a_1\circ z,a_2]_\circ = [a_1,a_2\circ z]_\circ\\
    a_1*a_2 & = (a_1\circ z) * a_2 = a_1*(a_2z)
\end{align*}
for all $a_1,a_2\in A$. The first two sets of equalities hold because $\Ann(A)$ lies in both $Z(A,\cdot)$ and $Z(A,\circ)$. The last set of equalities follow immediately from \eqref{eqn:star3} and \eqref{eqn:star1}. Thus, we indeed have $z\in M_{W'}(A)$.
\item \underline{second inclusion:} Let $z\in M_W(A)$. This implies that
\begin{align*}
    1 = [a,1]=[a,z],\quad
    1 = a*1 = a*z,\quad
    1 = 1*a = z*a,
\end{align*}
for all $a\in A$. Thus, we have  $z\in \Ann(A)$.
\end{enumerate}
This completes the proof.
\end{proof} 

However, we cannot extend Proposition \ref{prop:annihilator} to an arbitrary $n$. For example, for $n=2$, there are skew braces $A$ for which
\[ (A*A)*\Ann_2(A) \neq 1\]
by \cite[Section 3]{Tsang Grun}. This means that the $(n+1)$-fold left-normed star product is not well-defined modulo $\Ann_n(A)$ in general for $n\geq 2$. Nevertheless, the argument in Proposition \ref{prop:marginal} may be adapted to show that the $(n+1)$-fold right-normed star product is well-defined modulo $\Ann_n(A)$ for every $n\geq 1$. This shows that unlike the commutators, the left-normed and right-normed star products exhibit very different behaviors.

\begin{prop}\label{prop:annihilator'} Let $A$ be a skew brace. For any $n\geq 1$, we have
    \[\Ann_n(A) \subseteq M_{\{[x_1,x_2,\dots,x_{n+1}],[x_1,x_2,\dots,x_{n+1}]_\circ,x_1*x_2*\cdots *x_{n+1}\}}(A).\]
\end{prop}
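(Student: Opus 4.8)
The plan is to follow the template of Proposition \ref{prop:marginal}, handling the three words separately after first trimming the verification down to a single kind of insertion. Since $\Ann_n(A)$ is an ideal it is invariant under \eqref{eqn:lambda}, so as $b$ ranges over $A$ the element $\lambda_b(z)$ stays inside $\Ann_n(A)$; hence it suffices to show, for every $\zeta\in\Ann_n(A)$, that replacing some $a_i$ by $a_i\zeta$ or $\zeta a_i$ leaves each word unchanged. Because $\Ann_n(A)$ is normal in $(A,\cdot)$, a left insertion $\zeta a_i=a_i({}^{a_i^{-1}}\zeta)$ is a right insertion of another element of $\Ann_n(A)$; and because $a_i\zeta=a_i\circ\lambda_{\overline{a_i}}(\zeta)$ with $\lambda_{\overline{a_i}}(\zeta)\in\Ann_n(A)$, a right $\cdot$-insertion of $\Ann_n(A)$ coincides with a right $\circ$-insertion of $\Ann_n(A)$. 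So it is enough to verify one right insertion per slot, in whichever of $\cdot$ or $\circ$ is convenient.

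Before that I would record two structural facts. First, pushing the defining property of $\Ann$ through the surjections $A/\Ann_k(A)\to A/\zeta_k(A,\cdot)$ and $A/\Ann_k(A)\to A/\zeta_k(A,\circ)$ and inducting on $k$ gives $\Ann_n(A)\subseteq\zeta_n(A,\cdot)\cap\zeta_n(A,\circ)$. Second — and this is the heart of the matter — I claim the star-product analog of the inclusion \eqref{eqn:inclusion}, namely
\[\Ann_n(A)*A^{k}\subseteq\Ann_{n-k}(A)\qquad(1\le k\le n),\]
with the convention $\Ann_{\le 0}(A)=1$. The case $k=1$ is immediate from $\Ann_n(A)/\Ann_{n-1}(A)=\Ann(A/\Ann_{n-1}(A))$ together with $\Ann(B)*B=1$, and using $\overline{\Ann_n(A)}=\Ann_k(A/\Ann_{n-k}(A))$ and $\overline{A^k}=(A/\Ann_{n-k}(A))^{k}$ the general case reduces to the diagonal identity $\Ann_k(B)*B^k=1$ for every skew brace $B$. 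I would also use the easy mirror inclusion $A*\Ann_k(A)\subseteq\Ann_{k-1}(A)$, coming likewise from $A*\Ann(B')=1$ in quotients.

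With these in hand the two commutator words are routine: by Proposition \ref{prop:marginal} applied to the groups $(A,\cdot)$ and $(A,\circ)$ we have $\Ann_n(A)\subseteq\zeta_n(A,\cdot)=M_{[x_1,\dots,x_{n+1}]}(A,\cdot)$ and $\Ann_n(A)\subseteq\zeta_n(A,\circ)=M_{[x_1,\dots,x_{n+1}]_\circ}(A,\circ)$, so $\cdot$-insertions fix $[x_1,\dots,x_{n+1}]$ (which is directly the requirement of Definition \ref{def:marginal'}) and $\circ$-insertions fix $[x_1,\dots,x_{n+1}]_\circ$ (which the reductions above turn into the needed $\cdot$-insertion statement). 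For the right-normed star product I would imitate the slotwise induction of Proposition \ref{prop:marginal}. Writing the word as $P_1$ with $P_j=a_j*P_{j+1}$ and $P_{n+1}=a_{n+1}$, an insertion in slot $i$ first perturbs the innermost star containing $a_i$: for $i=n+1$ the perturbation at $P_n=a_n*a_{n+1}$ lies in $A*\Ann_n(A)\subseteq\Ann_{n-1}(A)$ by \eqref{eqn:star1}, and for $i\le n$ the perturbation at $P_i$ lies in $\Ann_{i-1}(A)$, its nontrivial part $\zeta*P_{i+1}$ coming from \eqref{eqn:star3} and the key inclusion $\Ann_n(A)*A^{\,n+1-i}\subseteq\Ann_{i-1}(A)$ (note $P_{i+1}\in A^{\,n+1-i}$). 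In every case this error is then carried upward through the outer stars by \eqref{eqn:star1}, dropping one annihilator level at each step (using $A*\Ann_k(A)\subseteq\Ann_{k-1}(A)$ and the centrality of $\Ann_k(A)/\Ann_{k-1}(A)$), until it reaches $\Ann_0(A)=1$ and $P_1$ is seen to be unchanged.

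The hard part will be the key inclusion, equivalently $\Ann_m(B)*B^m=1$. For $m=2$ one argues cleanly: for $z\in\Ann_2(B)$ the map $\delta_z=z*(-)$ takes values in the central subgroup $\Ann(B)$ and, by \eqref{eqn:star1}, is a homomorphism $(B,\cdot)\to\Ann(B)$; moreover $[z,c]_\circ\in\Ann(B)\subseteq\ker\lambda$ forces $\lambda_z\lambda_c=\lambda_c\lambda_z$, and since $B*\Ann(B)=1$ each $\lambda_c$ fixes $\Ann(B)$ pointwise, whence $\delta_z\circ\lambda_c=\delta_z$; then $\delta_z(a*b)=\delta_z(\lambda_a(b)b^{-1})=\delta_z(\lambda_a(b))\delta_z(b)^{-1}=1$. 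The genuine difficulty is extending this to all $m$, where $\delta_z$ is no longer a homomorphism (since $\Ann_{m-1}(B)$ is no longer central) and $\lambda_z$ need not commute with $\lambda_c$; I expect to handle it by strong induction on $m$, passing to $B/\Ann(B)$ and controlling the resulting $\lambda$-twists with the lower-level inclusions. It is worth stressing that only this ``$\Ann$ on the left'' inclusion holds: the mirror statement $A^k*\Ann_n(A)\subseteq\Ann_{n-k}(A)$ already fails for $(A*A)*\Ann_2(A)$, which is precisely why it is the right-normed, and not the left-normed, star product that is well defined modulo $\Ann_n(A)$.
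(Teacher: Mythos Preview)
Your proposal is correct and follows essentially the same route as the paper's proof: the same reductions via the ideal property of $\Ann_n(A)$, the same treatment of the two commutator words through $\Ann_n(A)\subseteq\zeta_n(A,\cdot)\cap\zeta_n(A,\circ)$, and the same slotwise analysis of the right-normed star product hinging on the inclusion $\Ann_n(A)*A^{n-i+1}\subseteq\Ann_{i-1}(A)$. The only differences are organizational: the paper simply cites this last inclusion from \cite[Theorem~6.3]{Tsang survey} rather than reproving it, and it replaces your explicit ``carry upward'' computation by a one-line appeal to the induction hypothesis $\Ann_{i-1}(A)\subseteq M_{x_1*\cdots*x_i}(A)$.
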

\begin{proof} 
The case $n=1$ follows from Proposition \ref{prop:annihilator}. Now, suppose that the claim holds up to $n-1$. Let $z\in \Ann_n(A)$. Since $\Ann_n(A)$ is an ideal of $A$, for each word and $1\leq i\leq n+1$, we only need to check either \eqref{eqn:w1} or \eqref{eqn:redundant}, and we also do not need to consider the action of $\lambda$. Note that
\begin{align*}
[a_1,a_2,\dots,a_{n+1}]& =[a_1,\dots,a_iz,\dots,a_{n+1}] \\
[a_1,a_2,\dots,a_{n+1}]_\circ & =[a_1,\dots,a_i\circ z,\dots,a_{n+1}]_\circ 
\end{align*}
for all $a_1,a_2,\dots,a_{n+1}\in A$ and $1\leq i\leq n+1$ by Proposition \ref{def:marginal} because 
\[ \Ann_n(A) \subseteq \zeta_n(A,\cdot)\cap \zeta_n(A,\circ)\]
by a simple induction. Thus, it remains to verify that
\begin{align*}
a_1 * \cdots * a_{n+1} &= (a_1\circ z) * a_2\cdots * a_{n+1}\\
& = a_1 * (a_2\circ z) *\cdots *a_{n+1}\\[-2pt]
& \hspace{2cm}\vdots\\
&= a_1 * \cdots * (a_n\circ z) * a_{n+1}\\
& = a_1 * \cdots *a_{n}* (a_{n+1}z)
\end{align*} 
for all $a_1,a_2,\dots,a_{n+1}\in A$. In other words, we check \eqref{eqn:w1} for $1\leq i\leq n$ and \eqref{eqn:redundant} for $i=n+1$. As in the proof of Proposition \ref{prop:annihilator}, the difference in the choice between \eqref{eqn:w1} and \eqref{eqn:redundant} is based on \eqref{eqn:star1} and \eqref{eqn:star3}.
\begin{enumerate}[(a)]
\item \underline{The case $i=n+1$:} Using \eqref{eqn:star1}, we can write
\begin{align*}
a_1 *a_2*\cdots *a_{n} *(a_{n+1}z)= a_1 * \cdots * a_{n-1} * \{ (a_{n} * a_{n+1}) z'\},
\end{align*}
where $z'= a_{n+1} (a_{n} * z ) a_{n+1}^{-1}$. Since $z\in \Ann_{n}(A)$ and $\Ann_{n-1}(A)$ is an ideal of $A$, we deduce that
\[a_n*z\in \Ann_{n-1}(A)\quad\mbox{and}\quad z' \in \Ann_{n-1}(A).\]
Since  $\Ann_{n-1}(A)\subseteq M_{x_1*x_2*\cdots * x_n}(A)$ by induction, it follows that
\[ a_1* \cdots *a_{n} * (a_{n+1}z) = a_1 * \cdots *a_{n-1} * (a_{n} * a_{n+1}), \]
which is as desired.
\smallskip
\item \underline{The case $1\leq i\leq n$:} Put $a = a_{i+1} * \cdots 
* a_{n+1}$. Since the star product is right-normed here, using \eqref{eqn:star3}, we can write
\begin{align*}
    &a_1*\cdots *(a_{i} \circ z)*\cdots *a_{n+1} \\
    &\hspace{1em}= a_1*\cdots *a_{i-1}* \{(a_i\circ z)*a \}\\
   &\hspace{1em} =a_1 * \cdots * a_{i-1} * \{ (a_i * (z * a))(z * a) (a_i *a) \}.
\end{align*}
Analogous to \eqref{eqn:inclusion}, it was shown in \cite[Theorem 6.3]{Tsang survey} that
\[ \Ann_n(A) * A^{n-i+1} \subseteq \Ann_{i-1}(A).\]
Since $z\in \Ann_n(A)$ and $a\in A^{n-i+1}$, we see that
\[ z * a , \, a_i * (z * a)\in \Ann_{i-1}(A).\]
But $\Ann_{i-1}(A)\subseteq M_{x_1*x_2*\cdots *x_i}(A)$ by induction, so it follows that
\[ a_1*\cdots * (a_i\circ z) * \cdots * a_{n+1} = a_1 * \cdots * a_{i-1}* (a_i*a),\]
which is as desired.
\end{enumerate}
This completes the proof.
\end{proof}

\section{Definition of $n$-isoclinism for skew braces}\label{last section}

As mentioned in the introduction, since the $(n+1)$-fold left-normed star product is not well-defined modulo the $n$th annihilator in general for $n\geq 2$, perhaps one should approach $n$-isoclinism for skew braces using verbal sub-skew braces and marginal left ideals. 

\smallskip

In the case of groups, the marginal subgroup of $[x_1,\dots,x_{n+1}]$ happens to be always normal, so one can form the quotient group. But this is not true for an arbitrary collection of words. In the case of skew braces, similarly the marginal left ideal need not be an ideal in general. To resolve this problem, we introduce a new definition that is analogous to the core of a subgroup in a group, so that we can form skew brace quotient.

\begin{lem}\label{lem:core}Let $A$ be a skew brace.
\begin{enumerate}[$(a)$]
\item For any family $\{I_u\}_{u\in U}$ of ideals of $A$ that is totally ordered with respect to inclusion, the union $\bigcup_{u\in U} I_u$ is an ideal of $A$.
\item For any ideals $I$ and $J$ of $A$, the product $IJ$ is an ideal of $A$. 
\item For any sub-skew brace $Z$ of $A$, the set
\[ \mathcal{S}(Z):= \{ I : I \mbox{ is an ideal of $A$ contained in $Z$}\}\]
has a unique maximal element with respect to inclusion.
\end{enumerate}
\end{lem}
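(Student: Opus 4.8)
The plan is to prove the three parts in order, since part $(c)$ will rely on the closure properties established in $(a)$ and $(b)$. Recall that an ideal of a skew brace $A$ is a subset that is a normal subgroup of both $(A,\cdot)$ and $(A,\circ)$ and is invariant under $\mathrm{Im}(\lambda)$; this is the characterization I would verify throughout. For part $(a)$, I would check directly that the union $\bigcup_{u\in U} I_u$ is closed under each of the relevant operations. Given $x,y$ in the union, total ordering means both lie in a common $I_u$, so products, inverses, and the like stay inside that single $I_u$ and hence in the union. The only slightly delicate points are normality in $(A,\circ)$ and $\lambda$-invariance: for any $a\in A$ and $x\in I_u$, the element $a\circ x\circ\overline{a}$ lies in $I_u$ by normality of $I_u$, and $\lambda_a(x)\in I_u$ by $\lambda$-invariance, so both remain in the union. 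I expect part $(a)$ to be routine.

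For part $(b)$, I would first note that $IJ = \{xy : x\in I,\, y\in J\}$ is already a subgroup of $(A,\cdot)$ because $I$ and $J$ are both normal in $(A,\cdot)$ (the usual fact that a product of normal subgroups is a subgroup). The task is to upgrade this to an ideal. For normality in $(A,\cdot)$ this is standard. For $\lambda$-invariance, apply $\lambda_a$ to a product $xy$; since $\lambda_a$ is an automorphism of $(A,\cdot)$ and both $I,J$ are $\lambda$-invariant, $\lambda_a(xy)=\lambda_a(x)\lambda_a(y)\in IJ$. The genuinely substantive point is normality in $(A,\circ)$, and here I would exploit a known fact about ideals: if $I$ is an ideal, then $(A,\cdot)$ and $(A,\circ)$ restrict to the same subgroup structure in the sense that $I$ is a normal subgroup of $(A,\circ)$ as well, and moreover the two group structures agree on the coset space. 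Concretely, I would show $a\circ(xy)\circ\overline{a}\in IJ$ by first rewriting in terms of $\cdot$ using $a\circ b=a\cdot\lambda_a(b)$ and the brace relation, reducing $\circ$-conjugation of a $\cdot$-product to a combination of $\lambda$-action and $\cdot$-conjugation, each of which preserves $IJ$ by the properties just established. This is the step I expect to require the most care, though it is a standard manipulation once the brace formulae are deployed.

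For part $(c)$, existence of a maximal element in $\mathcal{S}(Z)$ is where $(a)$ and $(b)$ pay off. The family $\mathcal{S}(Z)$ is nonempty since the trivial ideal $\{1\}$ lies in $Z$. I would argue as follows. Every chain in $\mathcal{S}(Z)$ has an upper bound, namely its union, which is an ideal of $A$ by part $(a)$ and is contained in $Z$ (a union of subsets of $Z$ is a subset of $Z$); hence by Zorn's lemma $\mathcal{S}(Z)$ has a maximal element $I_0$. The point of part $(b)$ is to promote maximality to a \emph{unique} maximal element, equivalently a largest element. If $I_1$ were another maximal ideal in $\mathcal{S}(Z)$, then $I_0 I_1$ is an ideal of $A$ by part $(b)$, and it is contained in $Z$ because $Z$ is a sub-skew brace (closed under $\cdot$) containing both $I_0$ and $I_1$; thus $I_0 I_1\in\mathcal{S}(Z)$, and since $I_0 I_1$ contains both $I_0$ and $I_1$, maximality forces $I_0 I_1=I_0=I_1$. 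Hence the maximal element is unique, which is exactly the assertion. The only subtlety I would be careful about is verifying that $I_0 I_1\subseteq Z$ uses closure of $Z$ under the multiplication $\cdot$, which holds because $Z$ is a sub-skew brace; I do not foresee any obstacle beyond keeping this dependence explicit.
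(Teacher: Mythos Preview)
Your proposal is correct and follows the same architecture as the paper: part $(a)$ is routine, part $(c)$ is Zorn's lemma via $(a)$ plus uniqueness via $(b)$, exactly as the paper does. The one noteworthy difference is in part $(b)$: where you plan to verify normality in $(A,\circ)$ by directly computing $a\circ(xy)\circ\overline{a}$, the paper instead observes that $IJ = I\circ J$ as sets (since $ab = a\circ\lambda_{\overline{a}}(b)$ and $a\circ b = a\lambda_a(b)$ with $\lambda_{\overline{a}}(b),\lambda_a(b)\in J$), after which normality in $(A,\circ)$ is immediate from the fact that a product of normal subgroups is normal. Your direct computation would also succeed, but the $IJ = I\circ J$ trick is cleaner and worth knowing.
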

\begin{proof}[Proof of $(a)$] 
This is a simple exercise.
\end{proof}
\begin{proof}[Proof of $(b)$] It is clear that $IJ$ is a left ideal of $A$ that is normal in $(A,\cdot)$. For any $a\in I$ and $b\in J$, observe that 
\[ ab = a\circ \lambda_{\overline{a}}(b)\quad\mbox{and}\quad a\circ b=a\lambda_{a}(b),\]
where $\lambda_{\overline{a}}(b),\lambda_a(b)\in J$ because $J$ is a left ideal of $A$. This gives $IJ = I\circ J$, so then $IJ$ is also normal in $(A,\circ)$, whence $IJ$ is an ideal in $A$.
\end{proof}
\begin{proof}[Proof of $(c)$] The set $\mathcal{S}(Z)$, which is partially ordered via inclusion, is non-empty because it contains the trivial sub-skew brace $1$. By (a), every totally ordered subset of $\mathcal{S}(Z)$ has an upper bound in $\mathcal{S}(Z)$, and so Zorn's lemma implies that $\mathcal{S}(Z)$ has a maximal element. By (b), this maximal element is necessarily unique. 
\end{proof}

\begin{definition} Let $A$ be a skew brace. For any sub-skew brace $Z$ of $A$, we define the \textit{core} of $Z$ in $A$, denoted by $\mathrm{Core}_A(Z)$, to be the unique maximal element of $\mathcal{S}(Z)$, namely the unique largest ideal of $A$  that lies in $Z$.
\end{definition}

We now propose a new definition.

\begin{definition}\label{def:nisoclinism}Let $W = W_n$ be a collection of skew brace words in $n+1$ variables. For any skew braces $A$ and $B$, a pair
\[ \xi : A/\mathrm{Core}_A(M_W(A))\longrightarrow B/\mathrm{Core}_B(M_W(B)), \quad \theta : V_W(A)\longrightarrow V_W(B)\]
of isomorphisms is said to be a \textit{$W$-isoclinism} if the diagram
\begin{equation}\label{eqn:diagram'}\begin{tikzcd}[column sep = 2.5cm, row sep = 1.5cm]
 (A/\mathrm{Core}_A(M_W(A)))^{\oplus n+1} \arrow{r}{\phi_w(A)} \arrow{d}[left]{\xi^{\oplus n+1}}& V_W(A)\arrow{d}{\theta}\\
(B/\mathrm{Core}_B(M_W(B)))^{\oplus n+1} \arrow{r}[below]{\phi_w(B)} & V_W(B)
\end{tikzcd}\end{equation}
commutes for every $w\in W$, where the horizontal maps are induced by
\[ \phi_w(-) : (x_1,x_2,\dots,x_{n+1}) \mapsto w(x_1,x_2,\dots,x_{n+1}),\]
and are well-defined by the definition of marginal left ideal. We say that $A$ and $B$ are \textit{$W$-isoclinic} if there is a $W$-isoclinism from $A$ to $B$.
\end{definition}

This new perspective of isoclinism has the advantage that the definition works for all skew braces, regardless of the choice of the collection of words. For example, consider the natural choices
\begin{align}\label{eqn:Wn}
W_{n} &= \{ [x_1,x_2,\dots,x_{n+1}],[x_1,x_2,\dots,x_{n+1}]_\circ,x_1*x_2*\cdots *x_{n+1}\}\\\notag
W_{(n)} & = \{[x_1,x_2,\dots,x_{n+1}],[x_1,x_2,\dots,x_{n+1}]_\circ,x_1\star x_2\star \cdots \star x_{n+1}\}
\end{align}
for $n\geq 1$. Clearly $W:=W_1=W_{(1)}$, and it follows from Proposition \ref{prop:annihilator} and \cite[Proposition 3.9]{isoclinism} that $W$-isoclinism coincides with the isoclinism defined in \cite{isoclinism}. Moreover, by the next proposition, the notions of $W_{n}$-isoclinism and $W_{(n)}$-isoclinism both get weaker as $n$ increases, which is a desirable property.

\begin{prop}Let $W_R$ and $W_L$ be two collections of skew brace words in two variables, and put $W = W_R\cup W_L$. For each $w = w_1\in W$, define
\[w_n(x_1,x_2,\dots,x_{n+1}) = \begin{cases}
    w(x_1,w_{n-1}(x_2,\dots,x_{n+1})) &\mbox{for }w\in W_R,\\
    w(w_{n-1}(x_1,\dots,x_{n}),x_{n+1}) &\mbox{for }w\in W_L,
\end{cases} \]
recursively. For each $n\geq 1$, put $W^n = \{w_n:w\in W\}$, which is a collection of skew brace words in $n+1$ variables. Then two $W^n$-isoclinic skew braces are always $W^{n+1}$-isoclinic.
\end{prop}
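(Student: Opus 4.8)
The plan is to mirror the proof of Proposition \ref{prop:nimpliesn+1}. Fix a $W^n$-isoclinism $(\xi,\theta)$ from $A$ to $B$, with $\xi:A/\mathrm{Core}_A(M_{W^n}(A))\to B/\mathrm{Core}_B(M_{W^n}(B))$ and $\theta:V_{W^n}(A)\to V_{W^n}(B)$, and write $\bar a$ for the class of $a\in A$ modulo $\mathrm{Core}_A(M_{W^n}(A))$. Everything rests on two decompositions of $w_{n+1}$ obtained by unwinding the recursion: for $w\in W_R$,
\[ w_{n+1}(x_1,\dots,x_{n+2}) = w_n(x_1,\dots,x_n,w(x_{n+1},x_{n+2})) = w(x_1,w_n(x_2,\dots,x_{n+2})),\]
and dually, for $w\in W_L$,
\[ w_{n+1}(x_1,\dots,x_{n+2}) = w_n(w(x_1,x_2),x_3,\dots,x_{n+2}) = w(w_n(x_1,\dots,x_{n+1}),x_{n+2}).\]

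First I would handle the verbal side. The first identity in each pair exhibits every generator $w_{n+1}(a_1,\dots,a_{n+2})$ of $V_{W^{n+1}}(A)$ as a value of $w_n$, so $V_{W^{n+1}}(A)\subseteq V_{W^n}(A)$, the analogue of $\gamma_{n+2}\subseteq\gamma_{n+1}$. Since $\mathrm{Core}_A(M_{W^n}(A))\subseteq M_{W^{n+1}}(A)$ (established below), the map $e_A:(A/\mathrm{Core}_A(M_{W^n}(A)))^{\oplus n+2}\to V_{W^{n+1}}(A)$ induced by $w_{n+1}$ is well defined. Feeding $w_{n+1}=w_n(\dots,w(\cdot,\cdot))$ into the commutativity of the $W^n$-diagram and using that the isomorphism $\xi$ preserves the two-variable word $w$, I would obtain the key identity
\[ \theta(e_A(\bar a_1,\dots,\bar a_{n+2})) = e_B(\xi(\bar a_1),\dots,\xi(\bar a_{n+2})),\]
exactly as in Proposition \ref{prop:nimpliesn+1}. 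As $\xi$ is onto, this shows $\theta$ sends generators of $V_{W^{n+1}}(A)$ to generators of $V_{W^{n+1}}(B)$, so $\theta$ restricts to an isomorphism $\theta':V_{W^{n+1}}(A)\to V_{W^{n+1}}(B)$.

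Next I would prove the marginal inclusion $M_{W^n}(A)\subseteq M_{W^{n+1}}(A)$, the analogue of $\zeta_n\subseteq\zeta_{n+1}$. Given $z\in M_{W^n}(A)$, I verify marginality of $z$ for $w_{n+1}$ at each slot by choosing the convenient decomposition: for $w\in W_R$, the recursion $w_{n+1}=w(x_1,w_n(x_2,\dots,x_{n+2}))$ turns slots $2,\dots,n+2$ into slots $1,\dots,n+1$ of the inner $w_n$, while $w_{n+1}=w_n(x_1,\dots,x_n,w(x_{n+1},x_{n+2}))$ handles slot $1$; the case $w\in W_L$ is dual. In every case each substitution of \eqref{eqn:w1} alters a single argument of a copy of $w_n$, which is absorbed by $z\in M_{W^n}(A)$. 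Taking cores gives $\mathrm{Core}_A(M_{W^n}(A))\subseteq\mathrm{Core}_A(M_{W^{n+1}}(A))$, hence a quotient map $\pi_A:A/\mathrm{Core}_A(M_{W^n}(A))\to A/\mathrm{Core}_A(M_{W^{n+1}}(A))$ with kernel $\overline{\mathrm{Core}_A(M_{W^{n+1}}(A))}$, and likewise for $B$.

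The construction of $\xi'$ is where I expect the real difficulty. In the group case one descends $\xi$ via $\zeta_{n+1}/\zeta_n=Z(G/\zeta_n)$, i.e.\ $\ker\pi_G$ is intrinsic to the quotient; this is \emph{not} available here, since the image $\overline{M_{W^{n+1}}(A)}$ need not be computable from the skew brace $A/\mathrm{Core}_A(M_{W^n}(A))$ alone, the defining equalities of $M_{W^{n+1}}$ comparing $w_{n+1}$-values in $A$ rather than in the quotient. The remedy is to bring in $\theta$: rewriting $z\in M_{W^{n+1}}(A)$ as a condition on $\bar z$ through the well-defined $e_A$, applying the injection $\theta$, and transporting by $\theta\circ e_A=e_B\circ\xi^{\oplus n+2}$ together with $\xi(\bar a\,\lambda_{\bar b}(\bar z))=\xi(\bar a)\,\lambda_{\xi(\bar b)}(\xi(\bar z))$, I would show
\[ \bar z\in\overline{M_{W^{n+1}}(A)}\iff \xi(\bar z)\in\overline{M_{W^{n+1}}(B)}.\]
Thus $\xi$ carries $\overline{M_{W^{n+1}}(A)}$ onto $\overline{M_{W^{n+1}}(B)}$, hence carries the largest ideal inside the former, namely $\ker\pi_A$, onto $\ker\pi_B$, so $\xi$ descends to an isomorphism $\xi'$ with $\xi'\circ\pi_A=\pi_B\circ\xi$. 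Finally, since $e_A=\phi_{w_{n+1}}(A)\circ\pi_A^{\oplus n+2}$, the identity $\theta\circ e_A=e_B\circ\xi^{\oplus n+2}$ descends to the commutativity of \eqref{eqn:diagram'} for each $w_{n+1}$, making $(\xi',\theta')$ the required $W^{n+1}$-isoclinism.
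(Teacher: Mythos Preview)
Your proposal is correct and follows essentially the same approach as the paper: both arguments rest on the two decompositions of $w_{n+1}$ (the recursive one and the ``inside-out'' one), use them to obtain the inclusions $V_{W^{n+1}}\subseteq V_{W^n}$ and $M_{W^n}\subseteq M_{W^{n+1}}$, and then transport marginality and verbality through the $W^n$-diagram via $\theta$ and $\xi$.

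The only noteworthy difference is in the construction of $\xi'$. The paper goes straight to the core: it observes that $\xi\bigl(C_{n+1}(A)/C_n(A)\bigr)$ is already an ideal $I_{n+1}(B)/C_n(B)$ of $B/C_n(B)$, so by maximality of the core it suffices to check $I_{n+1}(B)\subseteq M_{W^{n+1}}(B)$, and because $I_{n+1}(B)$ is an ideal one only needs to verify the single condition $w_{n+1}(b_1,\dots,b_iz,\dots,b_{n+2})=w_{n+1}(b_1,\dots,b_{n+2})$, without the $\lambda_b$-twists. Your route instead first proves that $\xi$ matches the full images $\overline{M_{W^{n+1}}(A)}$ and $\overline{M_{W^{n+1}}(B)}$ (which requires handling all the $\lambda_b$-variants in \eqref{eqn:w1}), and only afterwards takes the largest ideal on each side. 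Both are valid; the paper's version is a bit leaner, while yours establishes the slightly stronger intermediate fact that $\xi$ preserves the marginal left ideal itself, not just its core.
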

\begin{proof}Let $A$ and $B$ be $W^n$-isoclinic skew braces and let $(\xi,\theta)$ be any $W^n$-isoclinism from $A$ to $B$. For simplicity, put
\[ M_k(A) = M_{W^k}(A),\quad C_k(A) = \mathrm{Core}_A(M_k(A)),\quad V_{k+1}(A) = V_{W^k}(A),\]
for any $k$, and similarly for $B$. A simple induction shows that
\[ w_{n+1}(x_1,x_2,\dots,x_{n+2}) = \begin{cases}
    w_n(x_1,\dots,x_n,w(x_{n+1},x_{n+2}))&\mbox{for }w\in W_R,\\
    w_n(w(x_1,x_2),x_3,\dots,x_{n+2})&\mbox{for }w\in W_L.
\end{cases}\]
Together with the original definition of $w_{n+1}$, we easily see that
\[ C_{n}(A) \subseteq C_{n+1}(A),\quad 
V_{n+1}(A)\supseteq V_{n+2}(A),\]
and similarly for $B$. In what follows, for each $a\in A$, let $\widetilde{a}$ denote the class of $a$ modulo $C_{n}(A)$, and similarly for $B$. Also, let $w_{n+1}\in W^{n+1}$ be arbitrary. Here, let us assume that $w\in W_R$. We shall omit the case $w\in W_L$ because it may be dealt with in a completely analogous manner.

\smallskip

First, let us show that
\[\xi(C_{n+1}(A)/C_n(A)) = C_{n+1}(B)/C_{n}(B),\]
which would imply that $\xi$ induces an isomorphism
\[ \xi' : A/C_{n+1}(A) \longrightarrow B/C_{n+1}(B).\]
By symmetry, it suffices to show the left-to-right inclusion. To that end, let $I_{n+1}(B)$ be the ideal of $B$ such that
\[ \xi(C_{n+1}(A)/C_n(A)) = I_{n+1}(B)/C_n(B),\]
and we wish to prove $I_{n+1}(B)\subseteq C_{n+1}(B)$. Since $I_{n+1}(B)$ is an ideal of $B$, by the maximality of $C_{n+1}(B)$ it suffices to prove $I_{n+1}(B)\subseteq M_{n+1}(B)$, and for each $z\in I_{n+1}(B)$, we only need to check that
\[ w_{n+1}(b_1,b_2,\dots,b_{n+2}) = w_{n+1}(b_1,\dots,b_iz,\dots,b_{n+2}) \]
for all $b_1,b_2,\dots,b_{n+2}\in B$ and $1\leq i\leq n+2$. Now, we may write
\[ \xi(\widetilde{u}) = \widetilde{z},\quad\xi(\widetilde{a_1})=\widetilde{b_1},\,\ \dots\,\ ,\quad \xi(\widetilde{a_{n+2}})=\widetilde{b_{n+2}}\]
for some $u\in C_{n+1}(A)$ and $a_1,\dots,a_{n+2}\in A$. By the commutativity of \eqref{eqn:diagram'} for the word $w_n$ and the fact that $u\in C_{n+1}(A)$, we then see that
\begin{align*}
    w_{n+1}(b_1,b_2,\dots,b_{n+2}) &  = w_n(b_1,\dots,b_n,w(b_{n+1},b_{n+2}))\\
    &= \theta(w_n(a_1,\dots,a_n,w(a_{n+1},a_{n+2})))\\
    & = \theta(w_{n+1}(a_1,a_2,\dots,a_{n+2}))\\
    & = \theta(w_{n+1}(a_1,\dots,a_iu,\dots,a_{n+2}))\\
    & = \theta(w_n(a_1,\dots,a_iu,\dots,a_n,w(a_{n+1},a_{n+2}))) \tag{$\dagger$}\\
    & = w_n(b_1,\dots,b_iz,\dots,b_n,w(b_{n+1},b_{n+2}))\tag{$\dagger$}\\
    & = w_{n+1}(b_1,\dots,b_iz,\dots,b_{n+2}).
\end{align*}
In the equalities ($\dagger$), we assumed $1\leq i\leq n$, but the same calculation works for $n+1\leq i\leq n+2$ as well. Since $w_{n+1}\in W^{n+1}$ was arbitrary, it follows that $z\in M_{n+1}(B)$, as desired.

\smallskip

Next, let us show that 
\[\theta(V_{n+2}(A)) = V_{n+2}(B),\]
which would imply that $\theta$ induces an isomorphism
\[ \theta': V_{n+2}(A)\longrightarrow V_{n+2}(B).\]
By symmetry, it suffices to show the left-to-right inclusion. By the commutativity of \eqref{eqn:diagram'} for the word $w_n$, for any $a_1,a_2,\dots,a_{n+2}\in A$, we have
\begin{align*}
    \theta(w_{n+1}(a_1,a_2,\dots,a_{n+2})) & = \theta(w_n(a_1,\dots,a_n,w(a_{n+1},a_{n+2})))\\
    &= w_n(\xi(\widetilde{a_1}),\dots,\xi(\widetilde{a_n}),\xi(w(\widetilde{a_{n+1}},\widetilde{a_{n+2}})))\\
    &= w_n(\xi(\widetilde{a_1}),\dots,\xi(\widetilde{a_n}),w(\xi(\widetilde{a_{n+1}}),\xi(\widetilde{a_{n+2}})))\\
    & =w_{n+1}(\xi(\widetilde{a_1}),\xi(\widetilde{a_2}),\dots,\xi(\widetilde{a_{n+2}}))\in V_{n+2}(B).
\end{align*}
Since the elements $w_{n+1}(a_1,a_2,\dots,a_{n+2})$ generate $V_{n+2}(A)$ as $w_{n+1}$ ranges over $W^{n+1}$, it follows that $\theta(V_{n+2}(A))\subseteq V_{n+2}(B)$, as desired.

\smallskip

The pair $(\xi',\theta')$ is clearly a $W^{n+1}$-isoclinism. Thus, we have shown that $A$ and $B$ are $W^{n+1}$-isoclinic, as claimed.
\end{proof}

Finally, let us go back to the collection $W_n$ of skew brace words as stated in \eqref{eqn:Wn}. For any skew brace $A$, we know from Proposition \ref{prop:annihilator'} that 
\[\Ann_n(A)\subseteq \mathrm{Core}_A(M_{W_n}(A)) \subseteq  M_{W_n}(A),\]
for any $n\geq 1$. In this case, instead of
\[ A/\mathrm{Core}_A(M_{W_n}(A)),\]
we can also define $W_n$-isoclinism via the possibly more natural choice
\[ A/\Ann_n(A).\]
At the moment, it is unclear, at least to us, which is the better choice. We end this paper with a few examples of $A$ such that
\begin{equation}\label{eqn:condition} \Ann_2(A) \subsetneq \mathrm{Core}_A(M_{W_2}(A)),\end{equation}
which means that the two choices are genuinely different. To that end, it is enough to exhibit a skew brace $A$ satisfying
\begin{equation}\label{eqn:ex}\gamma_3(A,\cdot) = \gamma_3(A,\circ) = A^3=1\quad\mbox{and}\quad \Ann_2(A)\neq A.\end{equation}
Indeed, the equalities trivially imply that
\[ \mathrm{Core}_A(M_{W_2}(A)) = M_{W_2}(A) = A,\]
and so we obtain the desired condition \eqref{eqn:condition}. We shall exhibit examples by considering braces (i.e. skew braces $A$ with $(A,\cdot)$ abelian) of order $p^3$ for a prime $p$. In this case, we have
\[ \gamma_2(A,\cdot) = \gamma_3(A,\cdot) =1\mbox{\quad and \quad}\gamma_3(A,\circ) = 1\]
because groups of order $p^3$ have nilpotency class at most $2$. Hence, we only need to check that $A^3 = 1$ and $\Ann_2(A)\neq A$.

\smallskip

Braces of order $p^3$ were classified in \cite{bachiller}. The following examples are:
\begin{enumerate}[$\bullet$]
\item the last brace in \cite[Theorem 3.1(2), Socle of order $2$]{bachiller},
\item the last two braces in \cite[Theorem 3.2(2), Socle of order $p$]{bachiller} with $a=0$,
\item the last brace in \cite[Theorem 3.2(3), Socle of order $p$]{bachiller} with $c=0$,
\end{enumerate}
respectively. We show that they all satisfy $A^3 =1$ and $\Ann_2(A)\neq A$.

\smallskip

In what follows, we use the notation
\[ C(a,2)= 1 + 2 + \cdots + (a-1) = \frac{a(a-1)}{2}\]
for any non-negative integer $a$.

\begin{example} Consider the brace 
\[ A = (\mathbb{Z}/2\mathbb{Z}\times\mathbb{Z}/4\mathbb{Z},+,\circ),\]
where $+$ is the usual addition, and $\circ$ is defined by
\[ \begin{pmatrix}
    a_1 \\ a_2\end{pmatrix}\circ \begin{pmatrix}b_1\\b_2\end{pmatrix} = \begin{pmatrix} a_1 + b_1 + C(a_2,2)b_2 \\ a_2 + b_2 + 2a_2b_2\end{pmatrix}.\]
The star product is then given by
\[ \begin{pmatrix}
    a_1 \\ a_2\end{pmatrix}* \begin{pmatrix}b_1\\b_2\end{pmatrix} = \begin{pmatrix} C(a_2,2)b_2 \\  2a_2b_2\end{pmatrix}.\]
From this, it is easy to see that
\[ A^2 = \mathbb{Z}/2\mathbb{Z} \times 2\mathbb{Z}/4\mathbb{Z},\quad A^3= \{0\}\times \{0\},\]
\[ \Ann(A) = \mathbb{Z}/2\mathbb{Z}\times \{0\},\quad \Ann_2(A)=\mathbb{Z}/2\mathbb{Z}\times 2\mathbb{Z}/4\mathbb{Z}.\]
Thus, this brace $A$ satisfies the condition \eqref{eqn:ex}.
\end{example}

\begin{example} Let $p$ be any odd prime. Consider the brace 
\[ A = (\mathbb{Z}/p\mathbb{Z}\times\mathbb{Z}/p^2\mathbb{Z},+,\circ),\]
where $+$ is the usual addition, and $\circ$ is defined by
\[ \begin{pmatrix}
    a_1 \\ a_2\end{pmatrix}\circ \begin{pmatrix}b_1\\b_2\end{pmatrix} = \begin{pmatrix} a_1 + b_1 + a_2b_2\\ a_2 + b_2 + p(a_1b_2 - C(a_2,2)b_2)\zeta\end{pmatrix}.\]
Here, either $\zeta=1$ or $\zeta=\varepsilon$ (following the notation of \cite{bachiller}) is a non-quadratic residue modulo $p$. The star product is then given by
\[ \begin{pmatrix}
    a_1 \\ a_2\end{pmatrix}* \begin{pmatrix}b_1\\b_2\end{pmatrix} = \begin{pmatrix}  a_2b_2 \\ p(a_1b_2 - C(a_2,2)b_2)\zeta\end{pmatrix}.\]
From this, it is easy to see that
\[ A^2 = \mathbb{Z}/p\mathbb{Z} \times p\mathbb{Z}/p^2\mathbb{Z},\quad A^3= \{0\}\times \{0\},\]
\[ \Ann(A) = \{0\}\times p\mathbb{Z}/p^2\mathbb{Z},\quad \Ann_2(A)=\mathbb{Z}/p\mathbb{Z}\times p\mathbb{Z}/p^2\mathbb{Z}.\]
Thus, this brace $A$ satisfies the condition \eqref{eqn:ex}.
\end{example}

\begin{example} Let $p$ be any odd prime. Consider the brace 
\[ A = (\mathbb{Z}/p\mathbb{Z}\times\mathbb{Z}/p\mathbb{Z}\times \mathbb{Z}/p\mathbb{Z},+,\circ),\]
where $+$ is the usual addition, and $\circ$ is defined by
\[ \begin{pmatrix}
    a_1 \\ a_2 \\a_3\end{pmatrix}\circ \begin{pmatrix}b_1\\b_2\\b_3\end{pmatrix} = \begin{pmatrix} a_1 + b_1 + (a_2 -C(a_3,2))b_2 \\ a_2 + b_2 + a_3b_3\\a_3+b_3\end{pmatrix}.\]
The star product is then given by
\[ \begin{pmatrix}
    a_1 \\ a_2 \\a_3\end{pmatrix}*\begin{pmatrix}b_1\\b_2\\b_3\end{pmatrix} = \begin{pmatrix} (a_2 -C(a_3,2))b_3 \\  a_3b_3\\0\end{pmatrix}.\]
From this, it is easy to see that
\[ A^2 = \mathbb{Z}/p\mathbb{Z}\times\mathbb{Z}/p\mathbb{Z}\times\{0\},\quad A^3= \{0\}\times \{0\}\times\{0\},\]
\[ \Ann(A) = \mathbb{Z}/p\mathbb{Z}\times\{0\}\times\{0\},\quad \Ann_2(A)= \mathbb{Z}/p\mathbb{Z}\times\mathbb{Z}/p\mathbb{Z}\times\{0\}.\]
Thus, this brace $A$ satisfies the condition \eqref{eqn:ex}.
\end{example}

\section*{Acknowledgments}

This research is supported by JSPS KAKENHI Grant Number 24K16891.

\end{document}